\definecolor{Gray}{gray}{0.9}
\newtheorem{theorem}{Theorem}[section]
\newtheorem{proposition}{Proposition}[section]
\newtheorem{assumption}{Assumption}[section]
\newtheorem{lemma}{Lemma}[section]
\newcommand{\norm}[1]{\left\lVert#1\right\rVert}
\DeclareMathOperator*{\argmin}{arg\,min}
\def\BibTeX{{\rm B\kern-.05em{\sc i\kern-.025em b}\kern-.08em
    T\kern-.1667em\lower.7ex\hbox{E}\kern-.125emX}}
\begin{document}

\setlength{\abovedisplayskip}{3pt}
\setlength{\belowdisplayskip}{3pt}

\title{A Distributed ADMM-based Deep Learning Approach for Thermal Control in Multi-Zone Buildings \textcolor{black}{under Demand Response Events}}

\author{Vincent Taboga$^1$ $^2$ $^3$, Hanane Dagdougui$^1$ $^2$ $^3$ \\ $^1$ Polytechnique Montreal, Department of Mathematics and Industrial Engineering \\
$^2$ Mila, Quebec Artificial Intelligence Institute \\ $^3$ Groupe d’etudes et de recherche en analyse des decisions (GERAD)}
\maketitle

\begin{abstract}

\textcolor{black}{The increasing electricity use and reliance on intermittent renewable energy sources challenge power grid management during peak demand, making Demand Response programs and energy conservation measures essential.} This research combines distributed optimization using ADMM with deep learning models to plan indoor temperature setpoints effectively. A two-layer hierarchical structure is used, with a central building coordinator at the upper layer and local controllers at the thermal zone layer. The coordinator must limit the building's maximum power by translating the building's total power to local power targets for each zone. Local controllers can modify the temperature setpoints to meet the local power targets. \textcolor{black}{While most algorithms are either centralized or require prior knowledge about the building's structure, our approach is distributed and fully data-driven. The proposed algorithm, called Distributed Planning Networks, is designed to be both adaptable and scalable to many types of buildings, tackling two of the main challenges in the development of such systems.} The proposed approach is tested on an 18-zone building modeled in EnergyPlus. The algorithm successfully manages Demand Response peak events. 

\end{abstract}

\def\abstractname{Note to Practitioners}
\begin{abstract}

This article addresses the problems faced by multizone buildings in maintaining thermal comfort while participating in Demand Response events. A centralized control approach is nonscalable, raises privacy concerns, and requires huge communication bandwidth. This work introduces a distributed algorithm that adapts the temperature setpoints of multi-zone buildings to enhance Demand Response participation and energy conservation. The proposed approach is fully data-driven and only requires historical data on weather, indoor temperature, and HVAC power consumption. The blueprint of the building and details about the HVAC system's architecture are not needed. Energy savings will be spread across all the zones depending on the user's choice of comfort temperature interval for each zone. The distributed optimization makes the approach scalable to large buildings with multiple zones. The algorithm is designed to work for buildings with various insulated zones such as apartment buildings and hotels, and may be extended to other types of buildings.
\end{abstract}

\begin{IEEEkeywords}
Deep Learning, Distributed Optimization, ADMM, Demand Response, HVAC, Buildings
\end{IEEEkeywords}


\section{Introduction}

\IEEEPARstart{B}{uildings} account for approximately $40\%$ of the energy consumption worldwide. More than half of the building's energy is used to maintain the occupant's comfort, mainly through Heating, Ventilation and Air Conditioning (HVAC) systems \cite{levesque2018}. Buildings are crucial for integrating energy efficiency and Demand Response (DR) measures to reduce electricity consumption, especially during unstable power supply and peak periods. Many paths are investigated to benefit grid management by leveraging the building's power consumption. Local renewable energy sources, energy storage systems, and electric vehicles can now be embedded in the Building Energy Management System (BEMS) to form a building-integrated microgrid \cite{Fontenot2019, Ouammi2020}. Efforts are also made to improve insulation materials and directly reduce the energy needs for heating and cooling purposes \cite{Belussi2019}. In addition, many grid operators are now proposing DR programs to encourage consumers to reduce their energy consumption following an incentive-based or dynamic pricing program. In both cases, they propose rewards to the end-users in exchange for reductions in their power or energy consumption, especially during periods of high solicitation of the grid. Thus, smart control of the building loads may result in substantial financial advantages for the consumers while providing support for an efficient grid operation \cite{Dafonseca2021, Zhang2023, Xie2023}. Heating and cooling demands can significantly impact the total building's power consumption profile. The thermal inertia of the building offers enhanced operational flexibility to reduce/shift a part of the load using pre-cooling or pre-heating strategies. The use of smart temperature controllers, accounting for occupancy and weather variations, results in a direct reduction of energy usage. However, it has a direct impact on the user's comfort that must be preserved \cite{Dafonseca2021}.
Many works have focused on the creation of advanced BEMS for monitoring and controlling a building's energy requirements, including smart temperature controllers. 
Two of the main approaches used to develop BEMS are Model Predictive Control (MPC) and Reinforcement Learning (RL). For instance, \cite{Deconinck2016, Rezaei2020, tanaskovic2017} apply MPC to manage energy consumption and comfort in commercial and residential buildings. Works such as \cite{Vazquezcanteli2019, Zhang2019, Taboga2021, Du2021, Zhuang2023} use RL to create efficient and adaptive controllers for BEMS.

Optimization problems solved using MPC or RL are often formulated as centralized problems. In centralized problems, a single entity computes the solution to the problem and makes all the decisions. Centralized problem formulations for BEMS are further described in \cite{yao2021}. Although simple conceptually, this approach is not scalable as the system size grows \cite{Yu2021}. Furthermore, the necessary communications between local actuators and the central controller may raise privacy concerns and failure risks. Thus, distributed architectures are increasingly considered. Distributed architectures are of two types: the hierarchical approach and the fully distributed approach (peer-to-peer). In both approaches, the complex global optimization problem is decomposed into subproblems.
In hierarchical structures, the problem is solved through a coordinator who takes global decisions and communicates with Local Controllers (LCs). In fully distributed architectures, LCs directly communicate with each other to coordinate their actions. The choice of architecture is problem-dependent, as each architecture has pros and cons and highly depends on the communication infrastructure \cite{Drgona2020,Vazquezcanteli2019}. 

Distributed architectures have been increasingly used along with the Alternating Direction Method of Multipliers (ADMM) to tackle energy management challenges at the scale of the building. MPC is often considered to solve the optimization problems resulting from a decomposition, yielding a class of algorithms called Distributed Model Predictive Control (DMPC). Works using DMPC for HVAC control are further discussed in \cite{yao2021}. For instance, \cite{walker2017, Hou2017} present case studies with linearized building dynamics, where DMPC provides equivalent performances as centralized MPC while requiring fewer computational resources. In \cite{Wang2022}, Wang et al. use a multiple-layer distributed architecture to operate HVAC systems. Their model accounts for complex coupling dynamics between the components of the HVAC system. The focus of their study is more on optimizing the operation of HVAC systems given comfort requirements. The control of the zones' temperature setpoints to act on the power consumption has not been considered. In addition, the authors used simple linear models for the thermal dynamics of the building. In their study, Mork et al. \cite{Mork2022} successfully implement nonlinear DMPC to control a multi-zone building. Non-linear models are seldom used in the literature but are more precise because the thermal transfer processes are non-linear \cite{Mork2022, Drgona2021}. They use a physics-based model developed in Modelica, that accounts for the thermal and hydraulic coupling between zones. Their case studies show the efficiency of the distributed approach over centralized control. One drawback of their approach is that it requires enough knowledge about the building's blueprint to build the model. Some data required for modelling the coupling between zones, such as door and shading positions, might also be hard to access. In addition, they apply ADMM to non-convex problems without analyzing the convergence properties of their algorithm.

On the theoretical side, ADMM is proven to converge when the underlying problem is convex \cite{Boyd2011}, which is not the case in BEMS applications. To rely on the convergence guarantees of convex problems, many works linearize the dynamics of their system \cite{walker2017, Hou2017, Yang2021, Rezaei2022, Ma2011, Zhang2017}. Others rely on the empirical robustness of ADMM when applied to non-convex problems \cite{Mork2022}. Few theoretical results exist on the convergence of ADMM in non-convex settings \cite{Hong2014, Hong2018AApproach}. The use of a non-convex ADMM allows to consider models more sophisticated than linear ones. In particular, models relying on neural networks proved to be extremely accurate in forecasting the energy consumption in buildings \cite{Lu2022, Drgona2021}. Recurrent State Space Models (RSSM) also proved to be extremely accurate in modeling stochastic environments and are yet to be applied to building energy forecasting. Such models, along with planning algorithms provide state-of-the-art results in many benchmark control tasks \cite{Hafner2019, Hafner2020}.
\vspace{0.2cm}

In line with the International Energy Agency's (IEA) recommendations \cite{IEA2022}, the present study focuses on the critical role of widening temperature setpoints as a way for significant energy savings and effective implementation of DR programs. While optimizing the HVAC system's operation to efficiently track a setpoint is important, it is crucial to note that for a given temperature setpoint, there exists an inherent physical constraint on the extent of energy reduction achievable solely through the optimization of HVAC system operations. Further energy reduction may only be achieved by modifying the setpoint itself. This study thus uses temperature setpoints as actuators for power consumption regulation. Within the proposed framework, users select a preferred temperature setpoint and define an associated comfort range for each zone within a building. The BEMS dynamically adjusts the setpoints within the comfort intervals to distribute energy savings across the building. 

Our approach combines the recent advancement in non-convex ADMM algorithms, with state-of-the-art control methods based on deep learning models. First, we formulate the temperature control problem as a centralized problem with a constraint on the building's total power consumption. Then, we convert this problem into a non-convex sharing problem that we decompose using the ADMM algorithm. We provide convergence proof of the algorithm for this non-convex setting, which is key to accurately account for the non-linear dynamics of buildings. From this formulation, we derive a surrogate problem that can be efficiently solved in practice. We test both State Space Models (SSM) and RSSM to model the building's zone. The models are used to plan for the temperature setpoints that best match the comfort requirements of zones while enforcing a limit on the building's power usage. The two models are compared in terms of prediction accuracy to benchmark RSSM in a building environment, and in terms of control quality to assess the performance of our distributed control framework. The main contributions of this study are summarized as follows:

\color{black}

\begin{enumerate}
    \item  We combine non-convex ADMM and fully data-driven deep-learning models to develop a distributed control algorithm for HVAC systems. A convergence analysis of the ADMM algorithm in this non-convex setting is presented. To the best of our knowledge, this is the first work combining non-linear deep-learning models and ADMM.
    \item We propose a hard constraint of the maximum HVAC power consumption to prioritize energy savings during peak periods, while the use of comfort intervals instead of fixed temperature setpoints guarantees a minimum level of comfort for the users.
    \item We derive a deterministic and a stochastic version of our algorithm. The performances of both approaches are evaluated on a multi-zone building during DR events. The results demonstrate the control system's capability to lower power consumption during peak periods. The algorithm consistently converges to a solution with each iteration. Additionally, the distributed architecture of the control system improves its scalability as the building size increases.
\end{enumerate}

In section \ref{sec:problem_formulation}, we introduce the control problem and derive a distributed algorithm to solve it. In section \ref{sec:disctributed_planning_networks}, we present the deep learning models used to model the thermal zones and the implementation of the proposed distributed control algorithm. In section \ref{sec:experiments}, we present experimental results on the prediction accuracy of the models and the control performances of the algorithm. Finally, we discuss the results and some limitations paving the way for future works in section \ref{sec:discussion_and_further_work}.

\color{black}


\section{Description of the Control Problem}
\label{sec:problem_formulation}

\subsection{Problem Formulation}
\label{subsec:problem_formulation}

\color{black}

Consider a building composed of $N$ thermal zones equipped with independent HVAC systems. In each zone, occupants may choose a temperature setpoint $T^{sp}_{t,i}$ that is optimal for their comfort, where $t$ is a time index and $i$ a zone index. These setpoints may vary in time but are known beforehand. To track their setpoint, each HVAC system consumes a given amount of power $u_{t,i}$. The HVAC power consumption is directly impacted by the temperature setpoints  \cite{Sunardi2020} and we propose to use the setpoints as actuators to control the HVAC consumption. Specifically, we aim at changing the occupants' preferred setpoints $T^{sp}_{t,i}$ by $\delta_{t,i}$, so that the HVAC system tracks the setpoint $T^{sp}_{t,i} + \delta_{t,i}$. \color{black} Let $$ \boldsymbol{\Delta} = [\delta_{1,1}, \ldots, \delta_{1,N},\ldots, \delta_{H,1}, \ldots \delta_{H,N}]^T $$ be a vector containing the setpoint changes for all the zones on a horizon of $H$ timesteps. Let $ \boldsymbol{\Delta}_t = [\delta_{t,1}, \ldots \delta_{t,N}]^T$ be the vector containing all setpoint changes at time $t$. Similarly, $ \boldsymbol{\Delta}_i$ contains all the temperature setpoint changes for zone $i$ over the horizon. This indexing convention will be used for other vectors as well. In particular, the vector of power consumption: $$ \textbf{u} = [u_{1,1}, \ldots, u_{1,N},\ldots, u_{H,1}, \ldots u_{H,N}]^T. $$ \color{black}

The purpose of the control system is to apply a constraint on the entire building's HVAC maximum power consumption. We define the optimal setpoint changes $ \boldsymbol{\Delta}$ to satisfy this constraint as solutions to the following optimization problem:

\color{black}

\begin{align}
\underset{\boldsymbol{\Delta}}{\text{minimize}} \quad & \|\boldsymbol{\Delta}\|^2_2 \label{opt:original_control} \\
\text{subject to} \quad & \boldsymbol{s}_{t+1, i} = f_i \left(\boldsymbol{s}_{t,i}, \delta_{t,i}, \boldsymbol{s}_{y, j \in \mathcal{N}_i}, \delta_{t,j \in \mathcal{N}_i} \right), \forall t,i \label{opt:original_control_c1} \\
& m_i \leq \delta_{t,i} \leq M_i, \forall t,i \label{opt:original_control_c2} \\
& A \textbf{u} \leq \textbf{P}^{max} \label{opt:original_control_c3}
\end{align}
where 

\begin{itemize}[noitemsep, topsep=0pt]
    \item $f_i$ represents the dynamics of zone $i$,
    \item $\bold{s}_{t, i}$ is the state of zone $i$. The state includes, in particular, $T^{sp}_{t, i}$, the air temperature, and the HVAC power consumption. Note that the state is partially observable as it may also include the temperature of the walls or the thermal capacity of the zone.
    \item $\mathcal{N}_i$ is the set of neighbors of zone $i$,
    \item $A \in \mathbb{R}^{H \times HN}$ is a matrix that sums the power consumption of all the zones at each timestep. For instance, for two zones and a horizon of two timesteps:  $$ A = \begin{bmatrix}
1 & 1 & 0 & 0\\
0 & 0 & 1 & 1
\end{bmatrix}. $$
    \item  $\textbf{P}^{max} = [P^{max}_1, \ldots, P^{max}_H]$ is a vector containing the maximum power for each timestep. The vector inequality in Eq. (\ref{opt:original_control_c3}) is component-wise.
    \item \textcolor{black}{$m_i$ and $M_i$ are the minimum and maximum setpoint changes defined by the occupants in zone $i$.}
\end{itemize}
\color{black} This formulation prioritizes power savings over temperature comfort as long as the temperature setpoint stays in the comfort interval $\left[T^{sp}_{t,i} - m_i; T^{sp}_{t,i} + M_i \right]$ defined by the user.

To solve Problem (\ref{opt:original_control}) efficiently, we reformulate it as a sharing problem \cite{Hong2014} and apply ADMM to solve it. 
\color{black} First, the inequality constraint (\ref{opt:original_control_c3}) is relaxed using a new parameter $\textbf{P}^{tot}$ such that $\textbf{P}^{tot} \leq \textbf{P}^{max}$. This ensures the problem is feasible and removes a coupling constraint. Second, the true dynamics $f_i$ is approximated by a function $q_i$ that does not take the neighboring states as input. This assumption implies that one can reasonably predict a zone's state without having to predict the states of neighboring zones. Note that the initial state of a zone $i$, $s_{0,i}$ may still be augmented to include information about the neighbors at $t=0$. In many buildings such as hotels, offices, or residential buildings, the temperature ranges in the zones are similar, thus limiting the heat exchanges. We will assess this assumption experimentally in section \ref{sec:experiments} to show that it is reasonable if the zones are well insulated. Such an assumption is also discussed in \cite{Mork2022, Lamoudi2011}. The optimization problem becomes:
 
\begin{align}
\underset{\boldsymbol{\Delta}}{\text{minimize}} \quad & \| \boldsymbol{\Delta}\|^2_2 + ||A\textbf{u} - \textbf{P}^{tot}||^2_2 \label{opt:const_problem} \\
\text{subject to} \quad & \boldsymbol{s}_{t+1, i} = q_i \left(\boldsymbol{s}_{t,i}, \delta_{t,i} \right), \quad \forall t,i \label{opt:const_problem_c1} \\
& m_i \leq \delta_{t,i} \leq M_i, \forall t,i
\end{align}

\textcolor{black}{Note that we do not use weights to balance the two terms of the objective. In practice, the temperature and power values should be normalized for the two terms to have the same order of magnitude.} The HVAC power consumption is now driven towards the parameters $\textbf{P}^{tot}=[P_1^{tot}, \ldots, P_H^{tot}]$. The choice of $\textbf{P}^{tot}$ is important and will be further discussed in section \ref{sec:maximum_power_constraint_conversion}. 

\color{black}
The powers $\textbf{u}$ are components of the state $\textbf{s}$ and the constraint Eq. (\ref{opt:const_problem_c1}) can be made implicit by unrolling the dynamics $q_i$ from the initial state using the setpoint schedules. To this purpose, let $\phi_i$ map the state of a zone $i$ to its power usage
$ \displaystyle    u_{t,i} = \phi_i(\bold{s}_{t, i})$, and consider the following functions for all $i$ and $t$:
\color{black}

\begin{flalign}
   & Q_{t,i}(\bold{s}_{0, i},  \boldsymbol{\Delta}_i) =  q_i(q_i(\ldots q_i(\bold{s}_{0, i}, \delta_{0, i})), \delta_{t, i}) \label{eq:state_dynamics}\\
   & g( \boldsymbol{\Delta}_i) = \norm{ \boldsymbol{\Delta}_{i}}^2_2 \\
   & \ell( \boldsymbol{\Delta}) =  \sum_{t=1}^H \left (\sum_{i=1}^N \phi_i(Q_{t,i}(\bold{s}_{0,i},  \boldsymbol{\Delta}_i) - \textbf{P}^{tot}_t \right)^2 \label{eq:l_function}
\end{flalign}

The index to the function $Q_{t,i}(\bold{s}_{0,i}, \boldsymbol{\Delta}_i)$ indicates that it represents the state at time $t$ of zone $i$ computed from $\bold{s}_{0,i}$. Only the first $t$ elements of $ \boldsymbol{\Delta}_i$ are needed to compute $\bold{s}_{t,i}$, but the entire vector is passed for notational simplicity. The power usage of a zone is thus
\begin{equation}
    \label{eq:relation_power_setpoint}
    u_{t,i} = \phi_i ( Q_{t,i}(\bold{s}_{0,i}, \boldsymbol{\Delta}_i)).
\end{equation}

Using these notations, Problem (\ref{opt:const_problem}) may be reformulated as a sharing problem

\begin{align}
\underset{\boldsymbol{\Delta}}{\text{minimize}} \quad & \sum_{i=1}^N g(\boldsymbol{\Delta}_i) + \ell(\boldsymbol{\Delta}) \label{opt:sharing_problem} \\
\text{subject to} \quad & m_i \leq \delta_{t,i} \leq M_i \quad \forall t,i.
\end{align}

Each zone has a local objective represented by $g$ and a global objective represented by $\ell$. The total power available is a common resource that must be shared across all zones to satisfy the local comfort requirements.

\color{black}

\subsection{Distributed Sharing Algorithm}
\label{subsec:distributed_sharing_algorithm}

In this section, we derive a distributed algorithm based on ADMM to solve the Problem (\ref{opt:sharing_problem}). Note that no particular assumption is made on the dynamics $q_i$ of the zones and the problem is thus non-convex.
\color{black} First, we introduce duplicated variables $\Bar {\boldsymbol{\Delta}}_{1}, \ldots, \Bar{ \boldsymbol{\Delta}}_{N}$ and form an equivalent problem with $N$ vector linear equality constraints:
\begin{align}
\underset{\boldsymbol{\Delta}}{\text{minimize}} \quad & \sum_{i=1}^N g(\boldsymbol{\Delta}_i) + \ell \left(\sum_{i=1}^N B_i \Bar{\boldsymbol{\Delta}}_i \right) \label{opt:admm_problem} \\
\text{subject to} \quad & \Bar{\boldsymbol{\Delta}}_i = \boldsymbol{\Delta}_i \quad \forall i \\
& m_i \leq \delta_{t,i} \leq M_i \quad \forall t,i
\end{align}

The matrices $B_i \in \mathbb{R}^{NH \times H}$ for $i=1, \ldots, N$ are such that $ \boldsymbol{\Delta} = \sum_{i=1}^N B_i \boldsymbol{\Delta}_i$. The augmented Lagrangian, with Lagrange multipliers $\boldsymbol{\lambda}_{1}, \ldots, \boldsymbol{\lambda}_{N} \in \mathbb{R}^H$ associated with the equality constraints, is:

\begin{align}
    \mathcal{L}_\rho(\boldsymbol{\Delta}, \Bar{\boldsymbol{\Delta}}, \boldsymbol{\lambda}_{1}, \ldots, \boldsymbol{\lambda}_{N}) = \sum_{i=1}^N g(\boldsymbol{\Delta}_i) + \ell(\Bar{\boldsymbol{\Delta}}) \nonumber \\ 
    + \sum_{i=1}^N \left( \boldsymbol{\lambda}_{i}^T(\Bar{\boldsymbol{\Delta}_{i}}-\boldsymbol{\Delta}_{i}) + \rho \|\Bar{\boldsymbol{\Delta}_{i}}-\boldsymbol{\Delta}_{i}\|^2_2 \right).
\end{align}

\begin{algorithm}[b]
\caption{:non-convex consensus ADMM} \label{alg:non_convex_admm}
    \begin{algorithmic}[1]
    \STATE $\text{Initialize } \{\boldsymbol{\Delta}_i\}_{i=1,\ldots,N}, \{\Bar{\boldsymbol{\Delta}}_i\}_{i=1,\ldots,N}, \{\boldsymbol{\lambda}_i\}_{i=1,\ldots,N}$
    \STATE $k=0$
    \WHILE{has not converge}
        \FOR{$i=1,\ldots,N$}
        \STATE $\Delta_i^{k+1} \gets \argmin_{\boldsymbol{\Delta}_i} g(\boldsymbol{\Delta}_i) + \boldsymbol{\lambda}_i^{k,T}(\Bar{\boldsymbol{\Delta}}_i^k-\boldsymbol{\Delta}_i) \newline
        \hspace*{2.5cm}  + \frac{\rho}{2} ||\Bar{\boldsymbol{\Delta}}_i^k - \boldsymbol{\Delta}_i||^2_2$
        \ENDFOR
        \STATE $\Bar{\boldsymbol{\Delta}}^{k+1}  \gets \argmin_{\Bar{\boldsymbol{\Delta}}} \ell \left( \sum_{i=1}^N B_i \Bar{\boldsymbol{\Delta}}_i \right) \newline
        \hspace*{0.5cm} + \sum_{i=1}^N ( \boldsymbol{\lambda}_i^{k, T} (\Bar{\boldsymbol{\Delta}}_i-\boldsymbol{\Delta}_i^{k+1}) +\frac{\rho}{2} ||\Bar{\boldsymbol{\Delta}}_i- \boldsymbol{\Delta}_i^{k+1}  ||^2_2$ )
        \FOR{$i=1,\ldots,N$}
        \STATE $\boldsymbol{\lambda}_i^{k+1} \gets \boldsymbol{\lambda}_i^k + \rho (\Bar{\boldsymbol{\Delta}_i}^{k+1} - \boldsymbol{\Delta}_i^{k+1})$
        \ENDFOR
    \ENDWHILE
        
    \end{algorithmic}
\end{algorithm}

The sharing problem can be solved using Algorithm \ref{alg:non_convex_admm}. We distinguish two main sub-problems in this algorithm. The first one is the Local Controller (LC) problem (line 5) and the second one is the coordinator problem (line 7).

As stated in the following theorem, under some Lipschitz regularity assumptions and the right choice of $\rho$, this algorithm is guaranteed to converge although the problem is not convex. We will assess the quality of the solutions experimentally in section \ref{sec:experiments}. \textcolor{black}{A proof of the theorem is given the Appendix \ref{subsec:proof_theorem_1}}.

\begin{theorem}
\label{theorem:1}
The Algorithm \ref{alg:non_convex_admm} converges to the set of stationary solutions of Problem (\ref{opt:admm_problem}): 

$$ \lim_{k\xrightarrow{}\infty} dist(\{\boldsymbol{\Delta}_i^k\}, \{\Bar{\boldsymbol{\Delta}}_i^k\}, \{\boldsymbol{\lambda}_i^k\}; Z^*) = 0,$$

where $Z^*$ is the set of primal-dual stationary solutions of the problem. 

\end{theorem}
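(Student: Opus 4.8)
The plan is to follow the potential-function argument for non-convex ADMM established in \cite{Hong2014, Hong2018AApproach}, verifying that Problem (\ref{opt:admm_problem}) together with Algorithm \ref{alg:non_convex_admm} meets the hypotheses of that framework. The first step is to make precise the ``Lipschitz regularity'' of the statement: I would require that $\ell$ is continuously differentiable with an $L$-Lipschitz gradient on the feasible box, which holds whenever the unrolled dynamics $Q_{t,i}$ and the readout maps $\phi_i$ are built from smooth activations and the box $\prod_{t,i}[m_i,M_i]$ is compact, while the local term $g(\boldsymbol{\Delta}_i)=\norm{\boldsymbol{\Delta}_i}_2^2$ is already $2$-strongly convex. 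With $\ell$ smooth-and-coupled and $g$ strongly convex, the problem is exactly a non-convex sharing problem, and the two minimizations of Algorithm \ref{alg:non_convex_admm} are the block updates of the augmented Lagrangian $\mathcal{L}_\rho$: the separable $\boldsymbol{\Delta}$-block (line 5) and the coupled $\bar{\boldsymbol{\Delta}}$-block (line 7), followed by the dual ascent (line 9).

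The heart of the argument is to show that $\mathcal{L}_\rho$ acts as a potential function: it decreases monotonically by at least a positive multiple of the squared successive iterate differences, and it is bounded below. I would first establish the key coupling lemma. Writing the first-order optimality condition of the $\bar{\boldsymbol{\Delta}}$-update and substituting the dual update gives $\boldsymbol{\lambda}_i^{k+1} = -B_i^T \nabla \ell(\bar{\boldsymbol{\Delta}}^{k+1})$, so the dual variables are entirely controlled by $\nabla \ell$; the $L$-Lipschitz continuity of $\nabla\ell$ then yields $\norm{\boldsymbol{\lambda}_i^{k+1}-\boldsymbol{\lambda}_i^k} \le L\norm{B_i}\norm{\bar{\boldsymbol{\Delta}}^{k+1}-\bar{\boldsymbol{\Delta}}^k}$, bounding dual motion by primal motion. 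Next I would quantify the decrease contributed by each primal block, using $2$-strong convexity of $g$ for the $\boldsymbol{\Delta}$-block and the $\rho$-strong convexity supplied by the quadratic penalty for the $\bar{\boldsymbol{\Delta}}$-block, and offset it against the $\mathcal{L}_\rho$-increase of the dual step, which equals $\tfrac{1}{\rho}\sum_i \norm{\boldsymbol{\lambda}_i^{k+1}-\boldsymbol{\lambda}_i^k}^2$ and is bounded via the coupling lemma by $\tfrac{L^2}{\rho}\sum_i \norm{B_i}^2 \norm{\bar{\boldsymbol{\Delta}}^{k+1}-\bar{\boldsymbol{\Delta}}^k}^2$. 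Choosing $\rho$ large enough renders the net change strictly negative, which is precisely the ``right choice of $\rho$'' referenced in the theorem.

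With sufficient decrease in hand, I would show that $\mathcal{L}_\rho$ is bounded below by using $\boldsymbol{\lambda}_i^k = -B_i^T\nabla\ell(\bar{\boldsymbol{\Delta}}^k)$ together with the descent lemma for the $L$-smooth $\ell$ to absorb the linear and dual terms, leaving a quantity no smaller than the finite optimal value of the original objective, which is nonnegative and minimized over a compact box. Monotone decrease plus lower boundedness forces $\mathcal{L}_\rho^k$ to converge and the differences $\norm{\boldsymbol{\Delta}^{k+1}-\boldsymbol{\Delta}^k}$ and $\norm{\bar{\boldsymbol{\Delta}}^{k+1}-\bar{\boldsymbol{\Delta}}^k}$ to vanish; the coupling lemma then sends $\norm{\boldsymbol{\lambda}_i^{k+1}-\boldsymbol{\lambda}_i^k}\to 0$, and since this difference equals $\rho(\bar{\boldsymbol{\Delta}}_i^{k+1}-\boldsymbol{\Delta}_i^{k+1})$ the primal residual also vanishes. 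Finally, because the iterates lie in a compact set, I would pass to the limit along any convergent subsequence in the optimality conditions of lines 5 and 7, expressing stationarity of the constrained $\boldsymbol{\Delta}$-update through the normal cone of the box, to verify that every limit point satisfies the KKT conditions of Problem (\ref{opt:admm_problem}), i.e. lies in $Z^*$, giving $\mathrm{dist}(\cdot\,;Z^*)\to 0$.

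The main obstacle is the sufficient-decrease step and the calibration of $\rho$: since $\ell$ is non-convex, the $\bar{\boldsymbol{\Delta}}$-subproblem is made strongly convex only by the penalty, so $\rho$ must simultaneously dominate the curvature $L$ of $\ell$ to secure primal descent and dominate $L^2$ to swamp the dual-ascent term $L^2/\rho$; reconciling these while keeping the decrease constants strictly positive is the delicate balancing act. A secondary but, in this data-driven setting, quite concrete difficulty is verifying that the neural-network surrogate $\ell$ genuinely admits a finite global Lipschitz gradient constant $L$ on the box and estimating it, since this is what ultimately licenses the entire argument.
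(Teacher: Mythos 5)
Your proposal is correct and follows essentially the same route as the paper's own proof: the identity $\boldsymbol{\lambda}_i^{k+1}=-B_i^T\nabla\ell(\bar{\boldsymbol{\Delta}}^{k+1})$ bounding dual motion by primal motion, the sufficient decrease of $\mathcal{L}_\rho$ under a large enough $\rho$ (the paper's conditions $\rho\geq L$ and $\rho\bar{\gamma}\geq 2L^2$), lower-boundedness via the descent lemma, vanishing residuals, and a compactness argument identifying limit points with stationary solutions, all following Hong et al. The only cosmetic difference is that the paper exploits the orthogonality of the $B_i\boldsymbol{\Delta}_i$ to state the coupling inequality blockwise without the $\|B_i\|$ factor.
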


Querying the zones' dynamics $q_i$ is computationally expensive. To leverage the benefits of the decomposition, the dynamics' evaluation should be placed in the for loop and computed in parallel. In Algorithm \ref{alg:non_convex_admm} the dynamics are called in the $\ell$ function, in the coordinator problem. \color{black} It means that the local controllers are sending their temperature setpoint schedules to the coordinator which has to query all the zones' dynamics to compute the associated HVAC consumption. Algorithm \ref{alg:non_convex_admm} formulation is convenient for a theoretical analysis of the problem. In practice, however, it is more efficient to compute the HVAC consumption at the LC level and exchange power consumption messages with the coordinator. We thus explicit the relationship between the setpoints and the related power consumption given by Eq. (\ref{eq:relation_power_setpoint}) as a constraint in the LC problem and derive a surrogate LC problem: \color{black}

\begin{align}
\underset{\boldsymbol{\Delta}_i}{\text{minimize}} \quad & g(\boldsymbol{\Delta}_i) + \boldsymbol{\lambda}_i^T (\Bar{\textbf{u}}_i - \textbf{u}_i) + \rho || \Bar{\textbf{u}}_i - \textbf{u}_i ||_2^2 \label{opt:local_control} \\
\text{subject to} \quad & u_{t,i} = \phi(Q_{t,i}(\boldsymbol{s}_{0, i}, \boldsymbol{\Delta}_i)) \label{opt:local_control_c1} \\
& \bar{u}_{t,i} = \phi(Q_{t,i}(\boldsymbol{s}_{0, i}, \bar{\boldsymbol{\Delta}}_i)) \label{opt:local_control_c2} \\
& m_i \leq \delta_{t,i} \leq M_i \label{opt:local_control_c3}
\end{align}

as well as a surrogate coordinator problem:

\begin{align}
    \underset{\Bar{\textbf{u}}}{\text{minimize}} \quad \Tilde{\ell}(\Bar{\textbf{u}}) + \sum_{i=1}^N \boldsymbol{\lambda}_i^T (\Bar{\textbf{u}}_i - \textbf{u}_i) + \rho || \Bar{\textbf{u}}_i - \textbf{u}_i ||_2^2 \label{opt:coordinator}
\end{align}

where $\Tilde{\ell} (\bar{\textbf{u}}) = \norm{A \bar{\textbf{u}}- P^{tot}}$. In our implementation, these surrogate problems are used in place on the LC and coordinator problem in Algorithm \ref{alg:non_convex_admm}. The resulting control architecture is summarized in Fig. \ref{fig:control_architecture}.

Note that the surrogate formulation may be derived from the same decomposition steps as the ones presented in Section \ref{subsec:problem_formulation}, but using the following equation rather than Eq. (\ref{eq:relation_power_setpoint}):

\begin{equation}
    \label{eq:relation_setpoint_power}
    \delta_{t,i} = \psi(Q_{t,i}(s_{0,i}, u_i)),
\end{equation}

Eq. (\ref{eq:relation_setpoint_power}) bounds the state and power usage to a setpoint. The problem is that $\psi$ is not an injective application. For instance, if the temperature in a zone is $20^\circ C$, temperature setpoints below $20^\circ C$ will all yield a heating power of $0W$. Such an application is impossible to learn accurately in practice. That said, depending on the type of setpoint tracking system, how the power is measured and the operation regime, one may find conditions for $\psi$ to satisfy the requirements of Theorem 1. For instance, a proportional controller that operates with a zone's temperature around the setpoint or an ON/OFF controller used with Pulse Width Modulation would give good properties to $\psi$. Instead of enumerating such conditions, that are hard to verify in practice, we provide a theoretical analysis for the Problem (\ref{opt:sharing_problem}). This formulation arises naturally from Problem (\ref{opt:original_control}) and indicates the form of the ADMM decomposition for the surrogate problem. As shown in Section \ref{sec:experiments}, the surrogate formulation yields efficient computation and shows excellent convergence properties experimentally. In addition, it provides a good intuition about the underlying mechanism: the coordinator (i.e. Problem (\ref{opt:coordinator})) is choosing the amount of power for each room and each timestep, to ensure that the power constraint is always satisfied. The LCs (i.e. Problems (\ref{opt:local_control})) must then find the temperature setpoints to track this power target while preserving the users' comfort. Even if multiple setpoints may satisfy the constraints Eq. (\ref{opt:local_control_c1}) and (\ref{opt:local_control_c2}), the lowest setpoint changes will be chosen because of the term $g(\Delta_i)$ is the objective function.

\begin{figure}[t]
   \centering
   \includegraphics[width=0.4\textwidth]{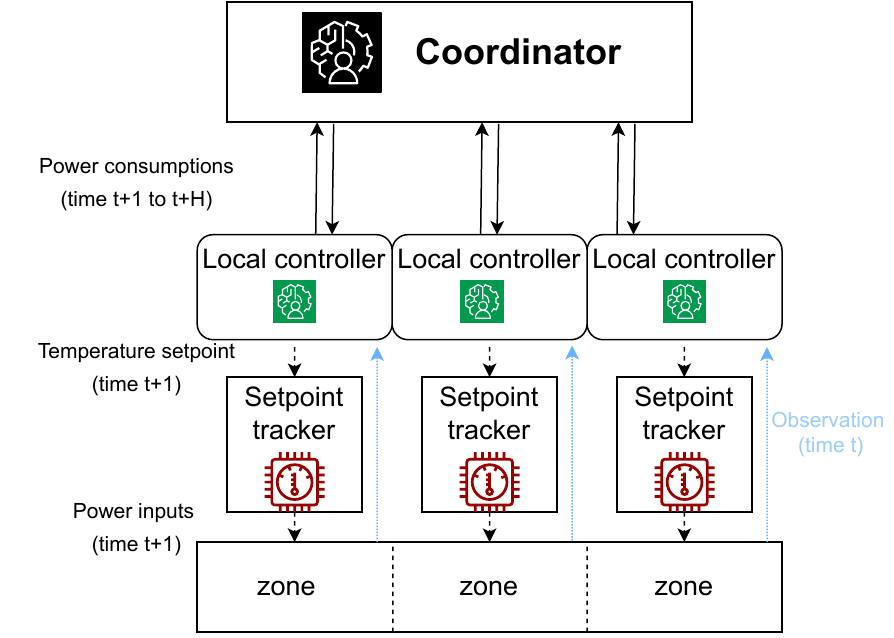}
   \caption{Two layers hierarchical control architecture. The LCs observe the state of their zone and plan for the best setpoints while communicating with the coordinator to enforce a maximum constraint on power usage. Setpoints are sent to the existing setpoint trackers that act on each zone.}
   \label{fig:control_architecture}
\end{figure}

\color{black}

\section{Distributed Planning Networks}
\label{sec:disctributed_planning_networks}

In this section, we present the methods used to solve the optimization problems of Algorithm \ref{alg:non_convex_admm}. In practice, the dynamics $q_i$ of the zones are unknown. In this study, we consider deep learning models to learn the dynamics from data. Both a deterministic and a stochastic approach are used, as described in the first part of this section. The models are used for planning to solve the LC problem (\ref{opt:local_control}), yielding the Distributed Planning Networks (DPN) algorithm. In the second part this section, we present a deterministic and a stochastic version of the DPN algorithm.

\subsection{Deep Learning Models for Thermal Zones}

\color{black}

\begin{figure*}[t]
\centering
  \includegraphics[width=0.6\textwidth, height=7.5cm]{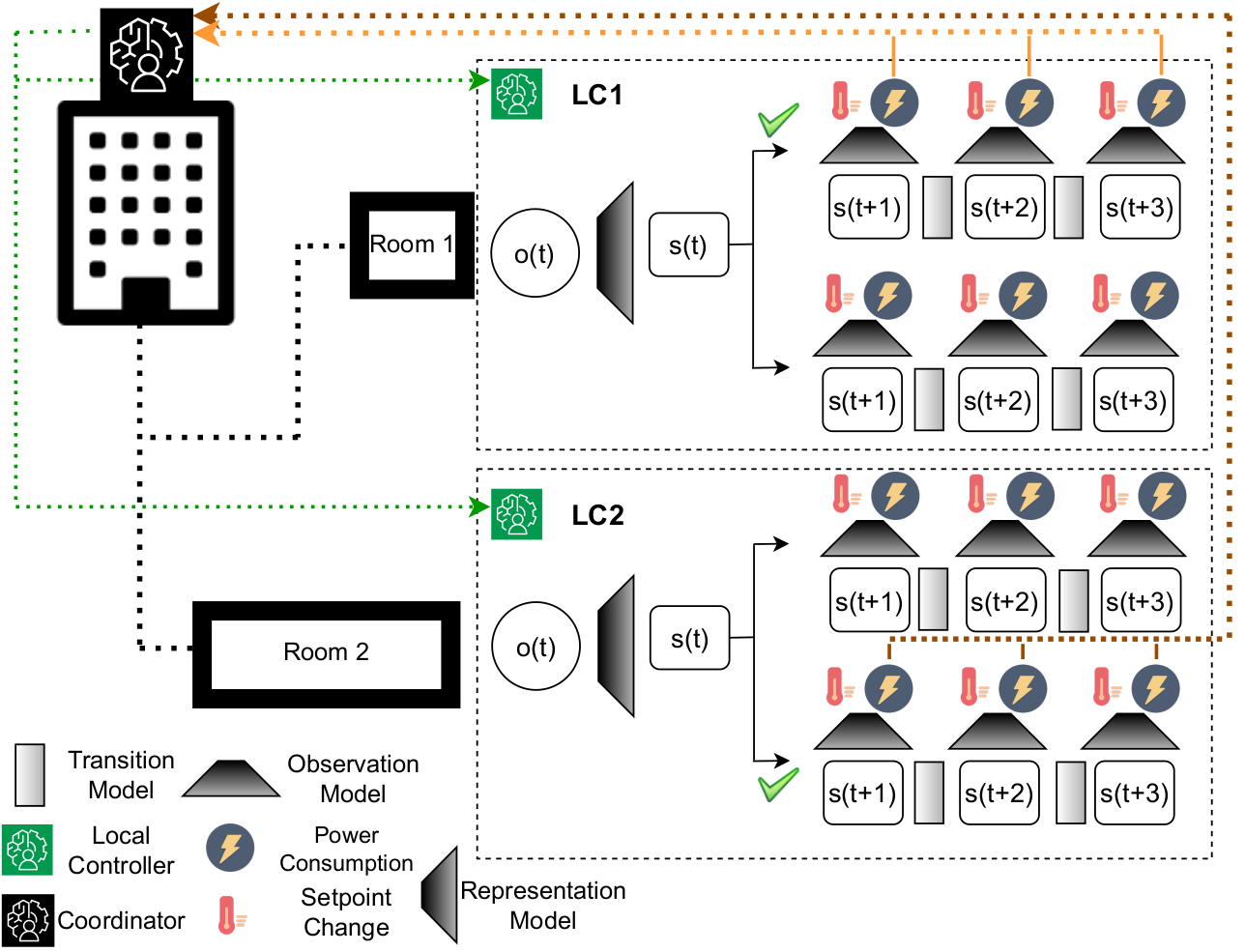}
  \caption{Distributed Planning Network - For each iteration of the ADMM algorithm, the coordinator sends power constraints to the LCs. Using the last observation, the LCs plan for the best trajectory of actions, either by shooting or random search, and send the predicted powers to the coordinator. Once the ADMM has converged, only the first action of the horizon is applied to the zones.}
  \label{fig:distributed_planning_networks}
\end{figure*}

\color{black} To plan for the optimal setpoint changes, LCs must be able to predict the evolution of the thermal zones' state. In this section, we present two types of models for the zones: State Space Models (SSM) and Recurrent State Space Models (RSSM). These models use inputs that include the observation of the zone $\textbf{o}_{t,i}$, actions $\textbf{a}_{t,i}$ and disturbances $\textbf{d}_t$ like weather and calendar information, to predict the next state in the observable space. To not overload the prediction models, the disturbances are not predicted by the models but given as inputs, as pictured in Fig. \ref{fig:prediction_scheme}. One advantage of this approach is that a single weather prediction system is required for the entire building, and the zones' model can focus on predicting only the zones' states. The zones are partially observable environments and making predictions in a latent space helps to cope with partial observability. 

\subsubsection{State Space Models}

The SSM has an encoder-decoder structure and the transition function is a recurrent cell that takes as input the actions and disturbances. The latent state $s_t$ is carried by the hidden state of the recurrent cell:

\begin{align}
    \bold{s}_{t+1} &= f_{\boldsymbol{\theta}}(\bold{s}_t, \bold{a}_t, \bold{d}_t) \label{eq:z_dynamics} \\
    \bold{o}_{t+1} &= \text{dec}_{\boldsymbol{\theta}}(\bold{s}_{t+1}) \label{eq:s_dynamics} \\
    \bold{s}_0 &= \text{enc}_{\boldsymbol{\theta}}(\bold{o}_{-1}, \ldots \bold{o}_{-n}) \label{eq:initial_condition}
\end{align}

where $\text{enc}$ is an recurrent neural network encoding lags of observations, $\text{dec}$ is a neural network decoding the latent state and $f$ is a recurrent cell used as the transition function. Prior works such as \cite{Drgona2021} have used similar models for representing buildings' dynamics. The model is trained to minimize the mean squared error between the predicted and actual powers on the entire prediction horizon.

\color{black}

\subsubsection{Reccurent State Space Model}

The RSSM is a stochastic model that allows forecasting of the power usage of each zone under uncertainties brought by many unobservable disturbances. The use of RSSM for planning is further described in \cite{Hafner2019}.  In this section, we focus on some key elements for completeness.

The RSSM is a black-box model, learned from previously collected data. It is composed of three main parts:

\begin{enumerate}
    \item A representation model (encoder), which maps the observations $\bold{o}_t$ to the latent state $\bold{s}_t$ of the system. 
    \item A transition model, that predicts the evolution of the latent space. Note that this transition model is composed of two parts: (1) a deterministic state model which predicts the evolution of the deterministic part of the state $\boldsymbol{h}_t = f(\bold{h}_{t-1}, \bold{s}_{t-1}, \delta_{t}, \bold{d}_t)$ and (2) a stochastic state model which samples the next latent state $\bold{s}_t \sim p(\bold{s}_t|\bold{h}_t)$. The latter part is important in our case as it accounts for the perturbations due to the occupancy and neighboring zones.
    \item An observation model (decoder), that outputs the observation corresponding to a given latent state.
\end{enumerate}

All these components are represented by neural networks. We use the same set of parameters $\theta$ to learn the observation model, the transition model, and the representation model. The objective is to maximize the variational bound as proposed in \cite{Hafner2019}. The transition model predicts one timestep ahead in the latent space. Multiple-step prediction is achieved by applying the model recursively.

\begin{figure}
   \centering
   \includegraphics[width=0.4\textwidth]{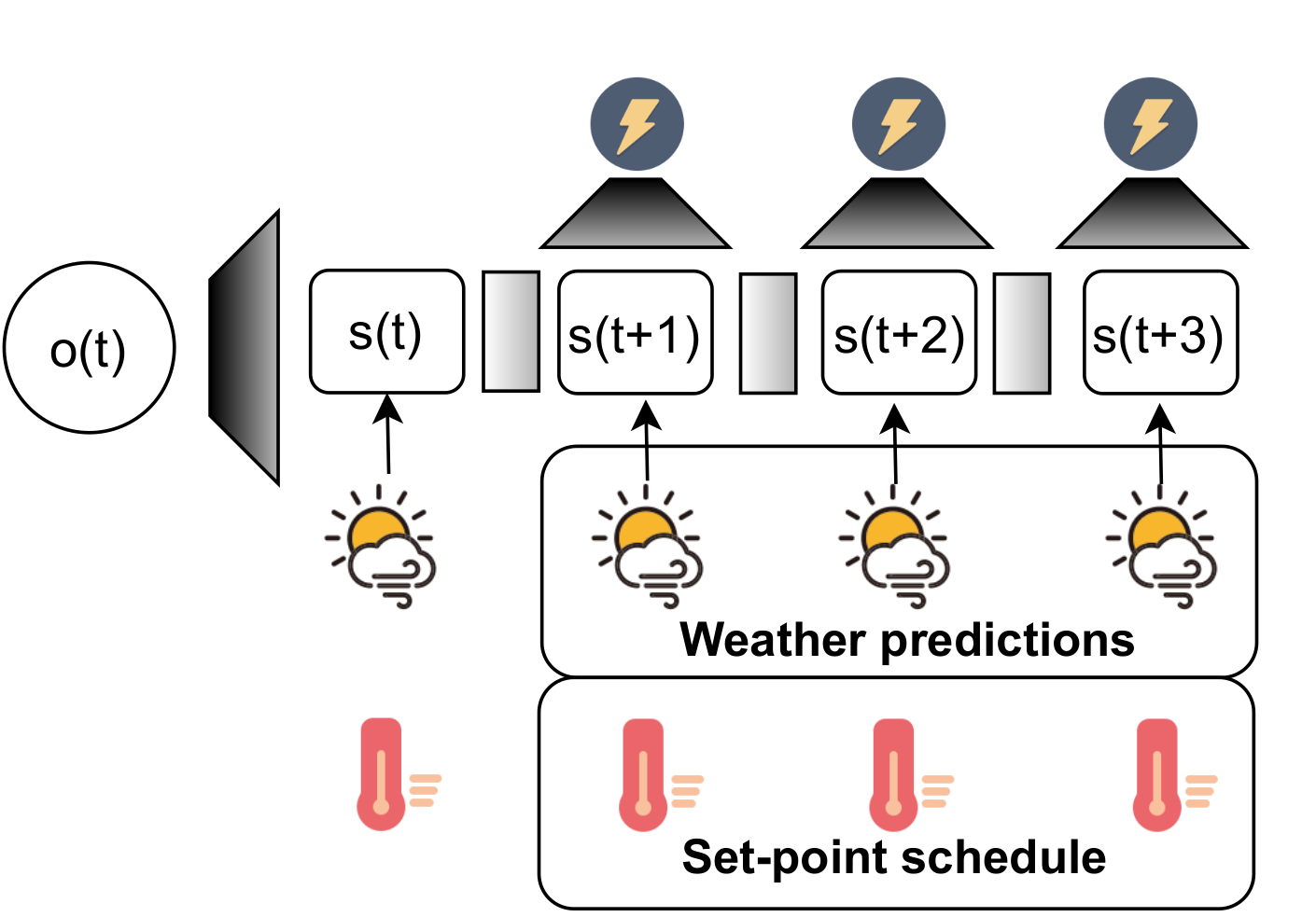}
   \caption{Multi-step ahead power predictions with weather forecasts}
   \label{fig:prediction_scheme}
\end{figure}

\subsection{Distributed Planning Networks Algorithms}

\color{black}
In this section, we present practical methods for solving the LC and coordinator optimization problems introduced in Algorithm \ref{alg:non_convex_admm}. The coordinator problem is a convex penalized least square problem. It can be solved easily with existing solvers. The challenge is more on the side of the LC problem (\ref{opt:local_control}). For this problem, the zones' models are used to plan for the best setpoint. We propose two approaches: the first one is a shooting method, where we differentiate through the prediction model and use the gradient to guide the search; the second one is a random search for the best temperature setpoints. Once the best setpoint changes have been found for the upcoming horizon, only the setpoint changes of the first timestep are implemented. The process is then repeated to allow for re-planning and correction of the control.

The proposed algorithm is called Distributed Planning Networks (DPN) as it embeds Planning Networks in a distributed architecture. Fig. \ref{fig:distributed_planning_networks} and Algorithm \ref{alg:DPN} summarize the approach. 

\subsubsection{Deterministic DPN (DDPN)}

The SSM is fully differentiable. One can formulate the Problem (\ref{opt:local_control}) as an initial value problem and use single shooting to minimize the running cost given by the objective function. The equality constraints representing the dynamics are implicitly satisfied by the prediction model. To handle the inequality constraint (\ref{opt:local_control_c3}), we use projected gradient descent to keep the setpoint changes within the desired interval. This approach assumes continuous values for the setpoints. In practice, we round the values at the resolution of the setpoint tracker of the zone afterward. A stopping criterion on the number of decimals changing in $\Delta_i$ at each iteration is used to make the approach computationally efficient.

\subsubsection{Stochastic DPN (SDPN)}
\label{subsubsec:SDPN}

The random search is conducted as follow:
\begin{enumerate}
    \item Generate random sequences of setpoint changes $\boldsymbol{\Delta}_i$.
    \item Sample the trajectories corresponding to each $\boldsymbol{\Delta}_i$ using the RSSM. For each $\boldsymbol{\Delta}_i$, the RSSM outputs a distribution for the trajectory of states. For one choice of $\boldsymbol{\Delta}_i$, multiple state trajectories are sampled from the distribution to account for uncertainties. The trajectory with the maximum power consumption is kept to optimize on the worst-case scenario.
    \item Select $\boldsymbol{\Delta}_i$ providing the lowest score as defined by  (\ref{opt:local_control}).
\end{enumerate}

While using ADMM, the steps presented above need to be repeated at each iteration. The second step is computationally expensive as it requires many forward passes in the transition and observation networks. However, only the third step depends on the ADMM iteration. To limit the computational time, the trajectories (i.e. the two first steps) are pre-computed and stored before the start of the ADMM iterations. Note that these two steps can be performed in parallel for each zone. At each ADMM iteration, the score of each trajectory is then computed from the pre-computed trajectory with respect to the current power targets $\bar{\textbf{u}}_i$.

\color{black}

\subsubsection{Maximum power constraint conversion}
\label{sec:maximum_power_constraint_conversion}

As mentioned in Section \ref{sec:problem_formulation}, translating the constraint's parameter $\textbf{P}^{max}$ to $\textbf{P}^{tot}$ (defined in Eq. (\ref{opt:const_problem}) ) is important as the building power consumption will be driven towards $\textbf{P}^{tot}$. To find the right $\textbf{P}^{tot}$, we leverage the prediction model of each zone. We forecast the total power with no setpoint changes ($\Delta = 0^{\circ}C)$ to find the business as usual power consumption ($P^{bu}_t$) and with the minimum setpoint change ($\delta_{t,i} = m_i^{\circ}C$ $ \forall t, i$) to find a lower bound on the power consumption ($P^{lb}_t$). If for all $t$ $P^{bu}_t$ is below $P_t^{max}$, the optimal solution of the Problem (\ref{opt:original_control}) is $\boldsymbol{\Delta} = 0$ and no further computation is required. If for any $t$, $P^{lb}_t$ is above $P_t^{max}$, the Problem (\ref{opt:original_control}) is unfeasible. In the latter case, depending on the energy pricing, one could either reduce as much as possible the temperatures to lower the consumption or take no action to preserve comfort. If for all $t$, $P^{max}_t$ falls between $P^{bu}_t$ and $P^{lb}_t$, we proceed with the DPN algorithm and set $\textbf{P}^{tot} =\min (P^{bu}_t, (1-\nu) P^{max}_t)$, where $\nu$ is a slack parameter. Using a slack is important to satisfy the inequality constraint of Problem (\ref{opt:original_control}) with the formulation of Problem (\ref{opt:const_problem}). In addition, as discussed in the next section, it helps mitigate prediction errors. This constraint conversion procedure is detailed in Algorithm \ref{alg:pmax2ptot}.

\color{black}

\begin{algorithm}[t]
    \caption{Distributed Planning Networks}
    \label{alg:DPN}
    \begin{algorithmic}[1]
        \STATE $N \text{number of zones, } H \text{ prediction horizon}$
        \STATE $\{LC_i\}_{i=1, \ldots, N} \text{ Local Controllers}$ 
        \STATE $\{ST_i\}_{i=1, \ldots, N} \text{ Setpoint trackers}$ 
        \STATE $Agg \text{ coordinator}$ 
        \STATE $Env  \text{ Building Environment}$
        
        \STATE $\{\mathcal{D}\}_{i=1, \ldots, N}  \text{ collect data from } Env$
        
        \STATE $\text{Train } LC_{i=1, \ldots, N} \text{ on } \mathcal{D}_{i=1, \ldots, N}$

        \FOR{each timestep t}
            \STATE $\text{Observe state }s_t. $
            \STATE $\textbf{P}^{max} \gets \text{Maximum power consumption over } H$
            \STATE $\{\textbf{T}^{sp}\}_{i=1,\ldots, N} \gets \text{Temperature setpoints over } H$
            \STATE $\textbf{Run the constraint conversion Algorithm \ref{alg:pmax2ptot}}$
            \IF{DO ADMM}
                \STATE $\text{Initialize } \boldsymbol{\Delta}, \Bar{\textbf{u}}, \{\boldsymbol{\lambda}\}_{i=1, \ldots, N}$
                \WHILE{has not converge}
                    \STATE $\{\Bar{\textbf{u}}\}_{i=1, \ldots, N} \gets Agg(\textbf{P}^{tot}) $
                    \STATE $(\{\textbf{u}\}_{i=1\ldots N}, \{\boldsymbol{\Delta}\}_{i=1\ldots N}) \gets LC_i(\Bar{\textbf{u}}_i, \boldsymbol{\lambda}_i)$
                    \STATE $\boldsymbol{\lambda}_i \gets \boldsymbol{\lambda}_i + \rho (\Bar{\textbf{u}}^{k+1}_i - \textbf{u}^k_i)  $
                \ENDWHILE
            \ENDIF
            \STATE $u_{t,i} \gets ST_i(T^{sp}_i, \delta_{t,i})$
            \STATE $Env.step(u_{t,i})$
            \STATE $\text{update } \{\mathcal{D}\}_{i=1, \ldots, N}$
            \IF{enough timesteps}
                \STATE $\text{Update } LC_i \text{ on } \mathcal{D}_i $
            \ENDIF
        \ENDFOR
    \end{algorithmic}
\end{algorithm}

\begin{algorithm}[t]
    \caption{Constraint conversion from $\textbf{P}^{max}$ to $\textbf{P}^{tot}$}\label{alg:pmax2ptot}
    \begin{algorithmic}[1]
        \STATE $\text{Inputs : current state } s_t \text{ and } \textbf{P}^{max}$
        \STATE $\text{Prediction models }\phi_1, \ldots, \phi_N $
        \STATE $\text{Parameters : } \nu, \{m_i\}_{i=1,\ldots,N}, H$
        \STATE \text{Initialize }$\textbf{P}^{tot} \in \mathbb{R}^H$
        \FOR{i=1, \ldots, N}
            \STATE $\textbf{P}_{i}^{bu} = \phi_i(Q(s_t,\boldsymbol{0}))$
            \STATE $\textbf{P}_{i}^{lb} = \phi_i(Q(s_t,\boldsymbol{\Delta}_{min}))$
        \ENDFOR
        \STATE $\textbf{P}^{bu} = \sum_{i=1}^N \textbf{P}_{i}^{bu}, \text{   }\textbf{P}^{lb} = \sum_{i=1}^N \textbf{P}_i^{lb}$
        \IF{$\textbf{P}^{bu} \leq (1-\nu) \textbf{P}^{max}$}
            \STATE $\delta_{t,i} = 0 \text{   } \forall t, i$
            \STATE $\text{DO ADMM = FALSE}$
        \ELSIF{$\textbf{P}^{lb} > (1-\nu) \textbf{P}^{max}$}
            \STATE  $\delta_{t,i} = m_i \text{   } \forall t, i$
            \STATE $\text{DO ADMM = FALSE}$
        \ELSE
            \STATE $\textbf{P}^{tot} = \min(\textbf{P}^{bu},(1-\nu)\textbf{P}^{max}) $
            \STATE $\text{DO ADMM = TRUE }$
        \ENDIF
        
    \end{algorithmic}
    \label{alg:pmax2ptot}
    
\end{algorithm}


\section{Experiments}
\label{sec:experiments}

\color{black}
\subsection{Description of the test environment}

To test the DPN algorithms, we consider the low-rise apartment building from the set of residential reference buildings developed by the Department of Energy (DOE) \cite{DOEreferencebuildings}. \color{black} The building is modeled in EnergyPlus, a software that provides thermal simulations for buildings with state-of-the-art accuracy. The building is composed of 3 floors of 6 apartments, each apartment being a thermal zone. Apartments are equipped with independent HVAC systems, temperature setpoints schedules, occupancy schedules, and appliance usage schedules. \color{black} In Quebec, the most significant strains on the electricity grid happen during winter, due to cold weather and electric heating. We thus consider winter periods ranging from January $1^{st}$ to March $31^{st}$ using Typical Meteorological Year (TMY) files from the city of Montreal. 

The observation, actions, and disturbances are composed of:

\begin{itemize}
    \item observation: zone temperature and HVAC power
    \item disturbances: outdoor temperature, humidity, direct normal irradiance, hour of the day, day of the week.
    \item action: heating temperature setpoint.
\end{itemize}

For simplicity, we consider having access to perfect weather prediction. This assumption is reasonable because this study only considers short-term predictions on which the weather can be accurately predicted \cite{Wang2020, Miller2018}.

\color{black} 

\subsection{Prediction accuracy of the zones' models}
\label{subsec:pred_accuracy}

\begin{figure}
    \centering
    \begin{subfigure}[b]{0.45\textwidth}
        \centering
        \includegraphics[width=\textwidth]{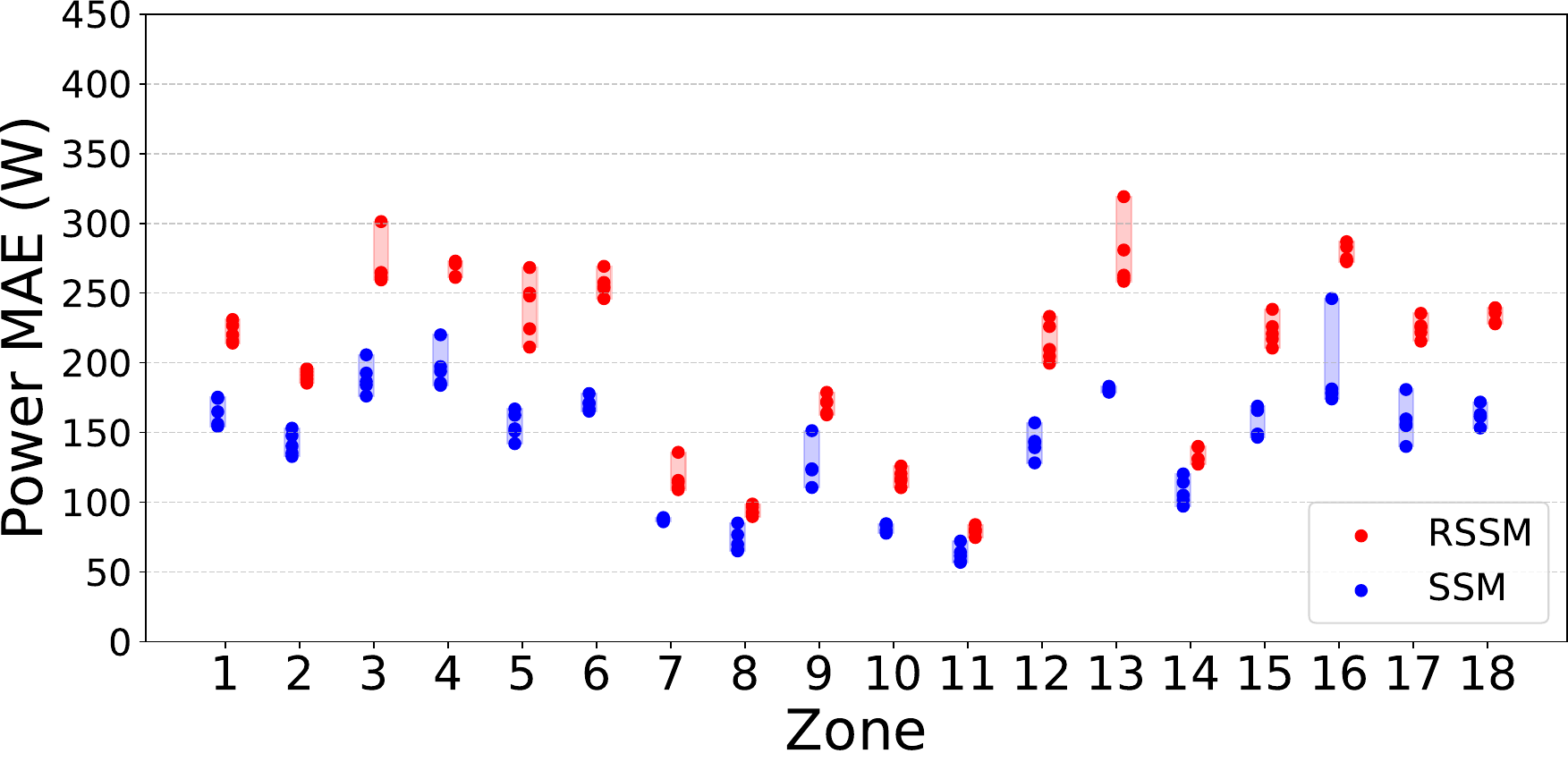}
        \caption{1h ahead predictions}
        \label{fig:scores_1h}
    \end{subfigure}
    \vspace{0.5cm}
    
    \begin{subfigure}[b]{0.45\textwidth}
        \centering
        \includegraphics[width=\textwidth]{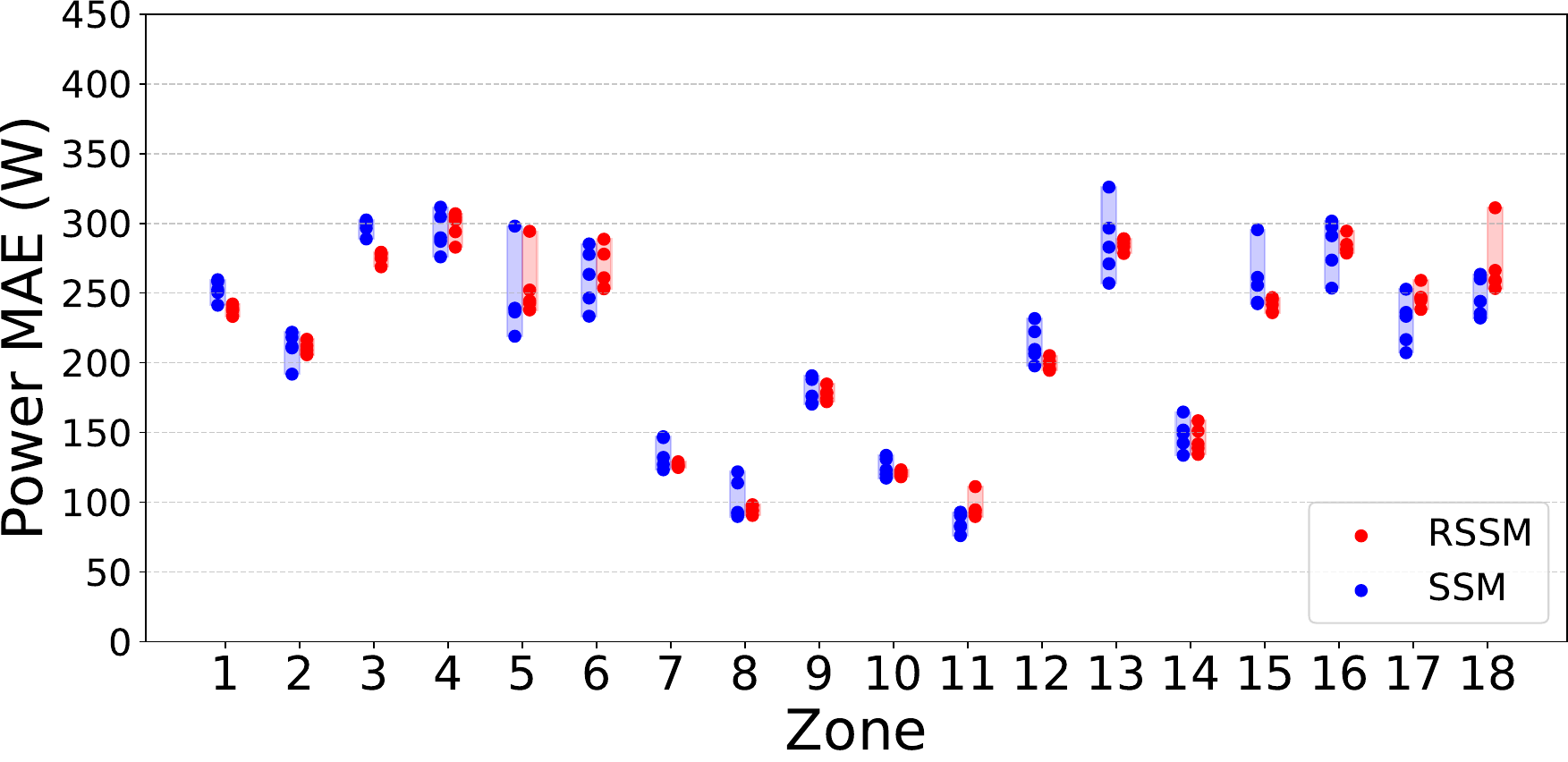}
        \caption{2h ahead predictions}
        \label{fig:scores_2h}
    \end{subfigure}
    \vspace{0.5cm}
    
    \begin{subfigure}[b]{0.45\textwidth}
        \centering
        \includegraphics[width=\textwidth]{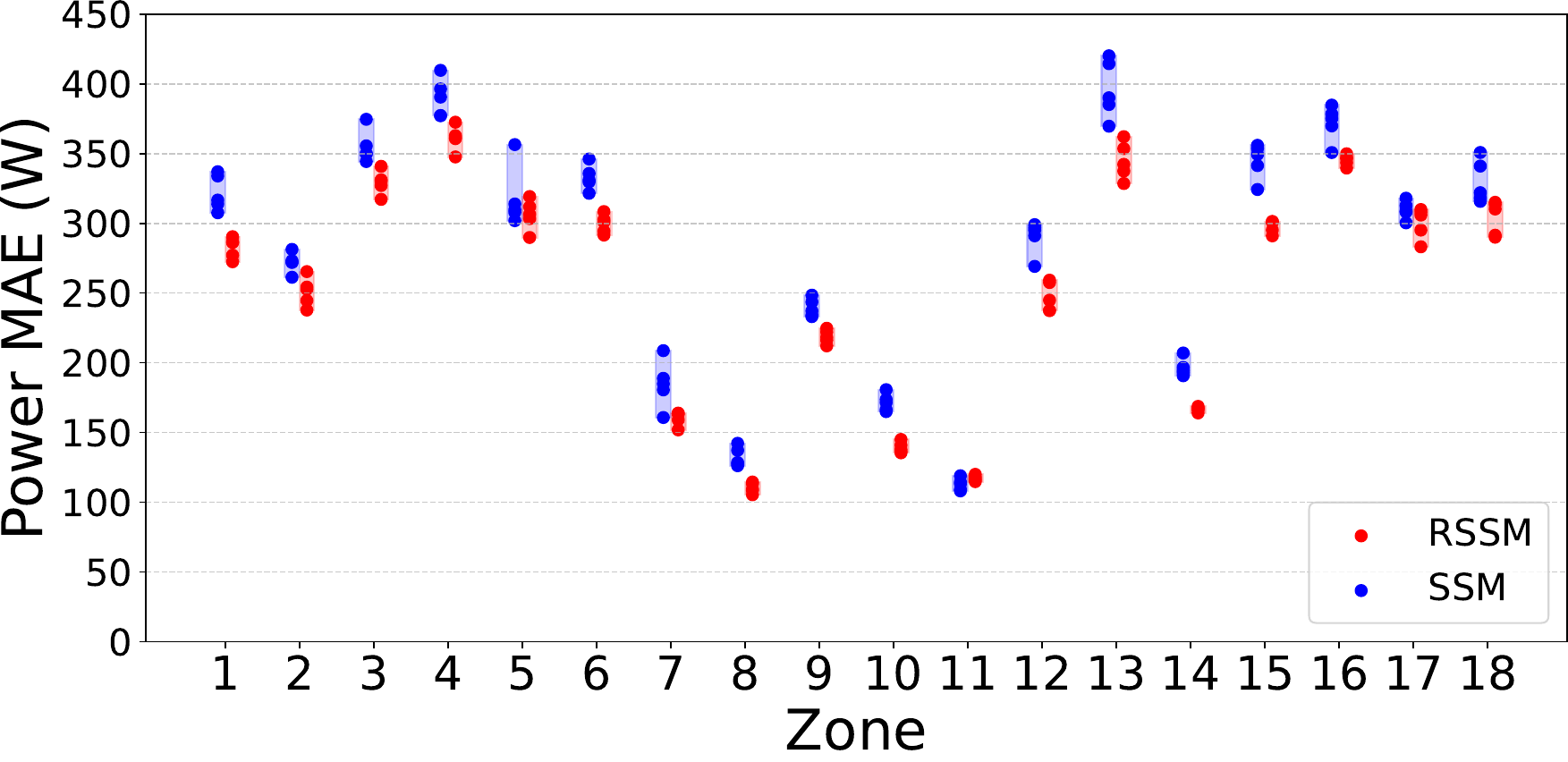}
        \caption{4h ahead predictions}
        \label{fig:scores_4h}
    \end{subfigure}

    \caption{Mean Average Error of the SSM and RSSM in each zone for different horizons. Each model is trained with 5 different training seeds and a dot represents the mean score on the test set for a training seed.}
    \label{fig:model_scores_different_horizons}
\end{figure}

\begin{table}[]
\centering
\small
\begin{tabular}{lccc|ccc}
\toprule
\multirow{2}{*}{Horizon} & \multicolumn{3}{c|}{SSM} & \multicolumn{3}{c}{RSSM} \\
                         & mean & min & max & mean  & min & max \\ 
\midrule
1h                       &     \textbf{4.16}   &     3.94  &     4.30   &   6.84    &  6.45&       7.19   \\
2h                       &     \textbf{6.88}      &     6.54     &    7.10      &     7.29      &   7.00       &     7.69     \\
4h                       &     9.70      &   9.19       &   10.05       &     \textbf{8.92}    &   8.41       &     9.78     \\
\bottomrule
\end{tabular}
\caption{Mean Average Percentage Error (\%) on the entire building HVAC power consumption. Predictions are made by aggregating the predictions of each zone. The mean, minimum and maximum are taken other the 5 training seeds.}
\label{tab:entire_building_prediction_score}
\end{table}

\begin{figure}[b]
    \centering
    \includegraphics[width=0.5\textwidth]{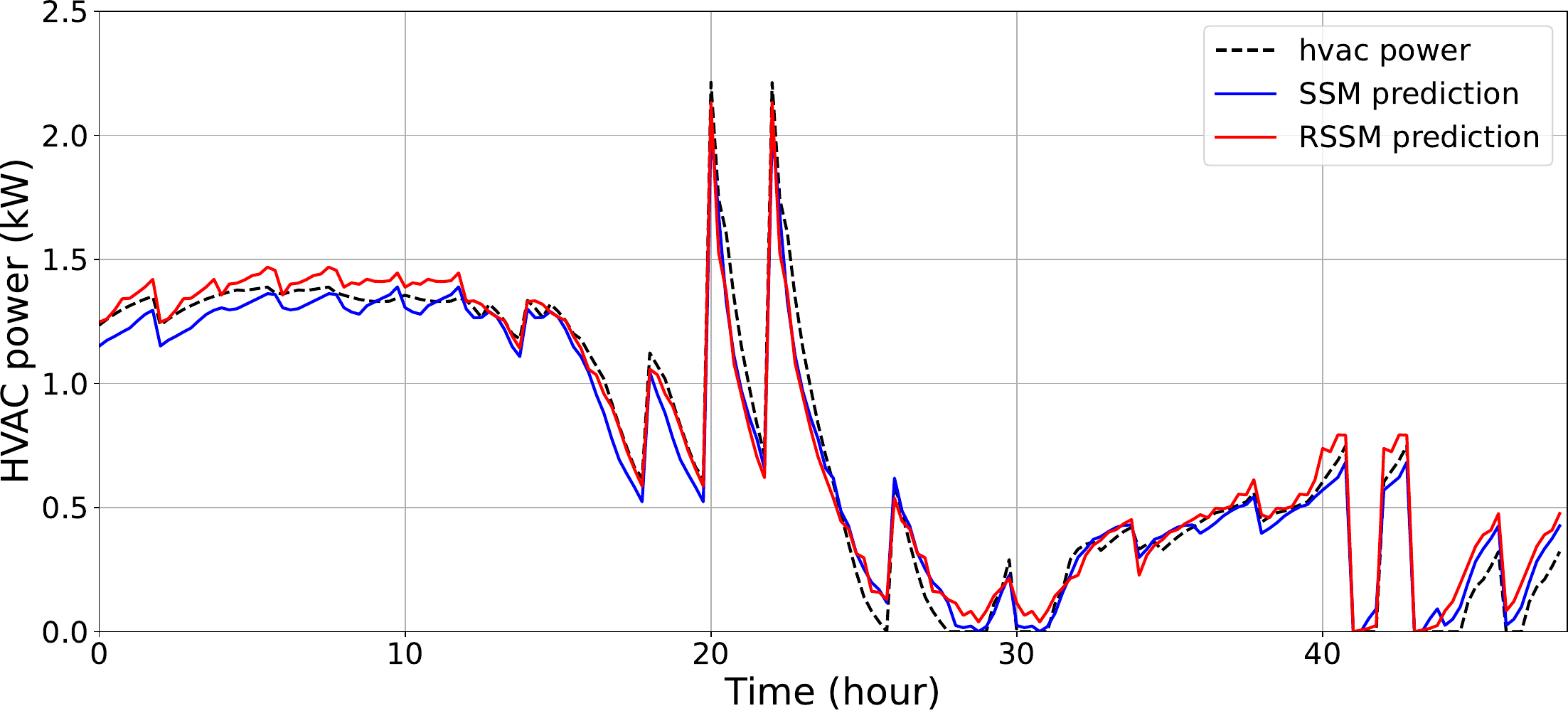}
    \caption{Example of power predictions compared to the actual power consumption for one zone.}
    \label{fig:prediction_accuracy}
\end{figure}

\color{black}

To train the models, data are collected while temperature setpoints vary randomly between $17^\circ C$ and $23^\circ C$ in the apartments. The simulation timestep is set to $15min$ and a setpoint is kept constant for a minimum duration of $1h$ and a maximum duration of $48h$. The training set is composed of 6 months of winter data. A validation set collected during the month of January is used to select the models' best configuration among different model sizes and learning rates. Models are then evaluated on test data collected in February. 

Fig. \ref{fig:model_scores_different_horizons} shows the Mean Average Error (MAE) of the models for different prediction horizons. For 1h ahead predictions, the SMMs perform better than the RSSMs. However, as the horizon lengthens the RSSMs give more accurate predictions than the SMMs. Note that the magnitude of the score for the models of zones 7 to 11 is lower because these zones are on the middle floor and require less heating. As illustrated in Fig. \ref{fig:prediction_accuracy}, both SSMs and RSSMs give accurate predictions of power consumption. Specifically, spikes in the power consumption corresponding to setpoint changes are captured by both model types. Note that these predictions are made without observing the states of the neighboring zones. 

The models' predictions can be aggregated to forecast the entire building's HVAC power and Table \ref{tab:entire_building_prediction_score} shows the Mean Average Percentage Error for both model types. As presented in the next section, the aggregated prediction is crucial to verify if the building's HVAC power satisfies the inequality constraint.

\subsection{DPN control perforamances}

During winter in Quebec, peaks mostly occur in the morning between 6 AM and 9 AM and in the evening between 4 PM and 8 PM \cite{HydroQuebecDR}. To test the DPN algorithm we focus on the morning periods and consider DR events every morning of the test period introduced in Section \ref{subsec:pred_accuracy}. These events translate in a maximum power constraint of $P^{max}_t = 21kW$ between $6AM$ and $9AM$. To satisfy this constraint, the algorithm must reduce the load by $0\%$ to $25\%$, depending on the day. This wide range of power reductions allows us to investigate the ability of the algorithm to handle small adjustments and significant modifications of the zones' temperature. With this setting a total of 10 days out of the 28 days contained events requiring the algorithm to take actions for peak-load reductions. For each zone, we set $-2^\circ C \leq \delta_t \leq 0^{\circ} C$ with increments of $0.25^{\circ}C$. The simulation timestep is $15min$ and re-planning occurs every two timesteps to let the zones' temperature vary after setpoint changes. Note that for the Stochastic DPN, the experiment are repeated with $5$ different computation seeds. The prediction horizon is set to 1 hour, resulting in 81 choices for setpoint changes. With this low-dimension action space, all possible actions are forecasted. For each chosen trajectory of setpoint changes, 100 state trajectories are sampled from the RSSM distribution.

\subsubsection{Control performance}

\begin{table*}[]
\centering
\resizebox{\textwidth}{!}{
\begin{tabular}{l|ccc|cccccc|ccc|cccccc}
\toprule
\multicolumn{1}{c|}{\multirow{2}{*}{Date}} & \multicolumn{9}{c|}{Constraint Violation}                                   & \multicolumn{9}{|c}{Predicted Constraint Violation}                        \\ 
\multicolumn{1}{c|}{}                      & \multicolumn{3}{c|}{DDPN} & \multicolumn{6}{c|}{SDPN} & \multicolumn{3}{c|}{DDPN} & \multicolumn{6}{c}{SDPN} \\
                                           & 0\%   & 5\%   & 10\%  & 1  & 2 & 2 ($\nu=5\%$) & 3  & 4  & 5  & 0\%   & 5\%   & 10\%  & 1  & 2 & 2 ($\nu=5\%$) & 3  & 4  & 5  \\ \midrule
02/04                                      & 83.3 & 41.7 & 0.0 & 0.0 & 0.0 & 0.0 &16.7 & 16.7 & 16.7 & 50.0 & 0.0 & 0.0 & 75.0 & 50.0 & 0.0 &66.7 & 33.3 & 83.3 \\
02/16                                      & 33.3 & 16.7 & 0.0 & 0.0 & 0.0 & 0.0 &0.0 & 0.0 & 0.0 & 8.3 & 0.0 & 0.0 & 41.7 & 33.3 & 0.0 &25.0 & 33.3 & 25.0 \\
02/17                                      & 66.7 & 16.7 & 0.0 & 0.0 & 0.0 & 0.0 & 0.0 & 0.0 & 0.0 & 41.7 & 0.0 & 0.0 & 41.7 & 50.0 & 0.0 &41.7 & 33.3 & 58.3 \\
02/18                                      & 66.7 & 25.0 & 8.3 & 16.7 & 16.7 & 0.0 & 0.0 & 0.0 & 0.0 & 41.7 & 0.0 & 0.0 & 50.0 & 58.3 &0.0 & 66.7 & 58.3 & 58.3 \\
02/21                                      & 58.3 & 8.3 & 0.0 & 0.0 & 0.0 & 0.0 &0.0 & 0.0 & 0.0 & 58.3 & 0.0 & 0.0 & 33.3 & 58.3 & 0.0& 58.3 & 50.0 & 41.7 \\
02/22                                      & 83.3 & 25.0 & 8.3 & 0.0 & 0.0 & 0.0 &0.0 & 8.3 & 8.3 & 41.7 & 0.0 & 0.0 & 50.0 & 83.3 &8.3 &58.3 & 66.7 & 66.7 \\
02/23                                      & 75.0 & 25.0 & 8.3 & 0.0 & 8.3 & 0.0 & 0.0 & 0.0 & 8.3 & 58.3 & 0.0 & 0.0 & 58.3 & 58.3 & 8.3&66.7 & 58.3 & 58.3 \\
02/24                                      & 33.3 & 16.7 & 0.0 & 0.0 & 0.0 & 0.0 & 0.0 & 0.0 & 0.0 & 41.7 & 0.0 & 0.0 & 33.3 & 33.3 & 0.0&25.0 & 33.3 & 41.7 \\
02/26                                      & 75.0 & 41.7 & 8.3 & 0.0 & 0.0 & 0.0 & 0.0 & 0.0 & 0.0 & 50.0 & 0.0 & 0.0 & 58.3 & 33.3 & 0.0&33.3 & 33.3 & 33.3 \\
02/28                                      & 25.0 & 0.0 & 0.0 & 0.0 & 25.0 & 0.0 & 8.3 & 25.0 & 0.0 & 0.0 & 0.0 & 0.0 & 8.3 & 0.0 & 0.0 &8.3 & 0.0 & 8.3 \\
\bottomrule
\end{tabular}}

\caption{\textcolor{black}{Percentage of timesteps where the constraint is not satisfied, and not satisfied in prediction during planning. For DDPN, results are shown for different slack values $\nu$. For the SDPN, the slack is set to $\nu=0\%$ and results are displayed for each computation seed.}}
\label{tab:constraint_violation}
\end{table*}

\begin{figure}
\centering
\begin{subfigure}[b]{0.49\textwidth}
    \includegraphics[width=\textwidth]{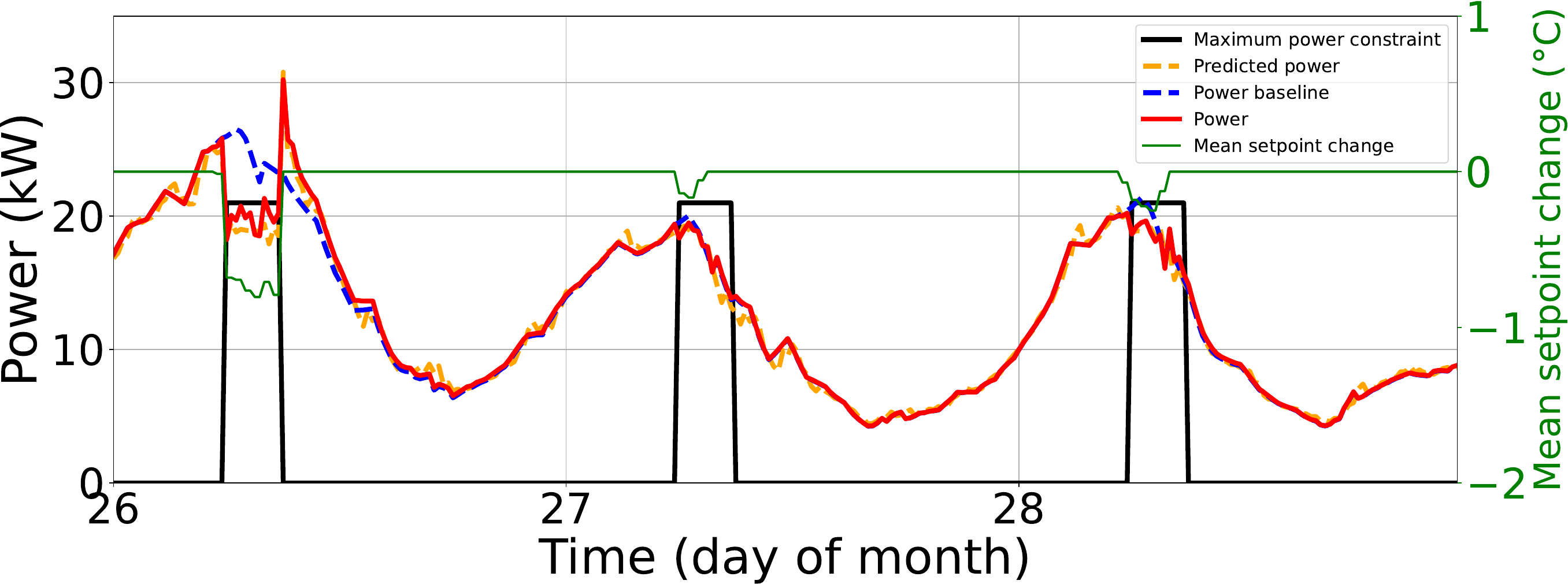}
    \caption{Determinisitc DPN ($\nu = 10\%)$}
    \label{fig:ddpn_control_results}
\end{subfigure}
\vspace{0.5cm}

\begin{subfigure}[b]{0.49\textwidth}
    \includegraphics[width=\textwidth]{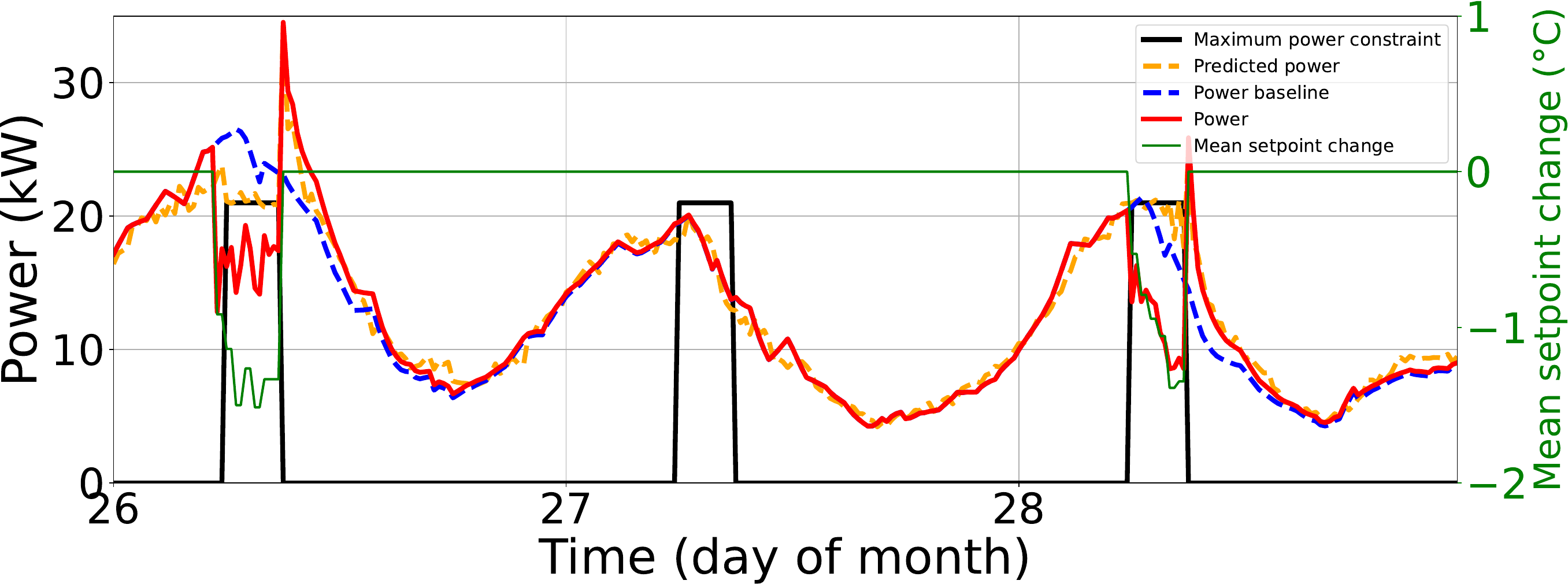}
    \caption{Stochastic DPN}
    \label{fig:sdpn_control_results}
\end{subfigure}
\caption{Three days control example for the Deterministic and Stochastic DPN. For simplicity, the results of a single seed are displayed for the SDPN.}
\label{fig:ddpn_sdpn_control_results}
\end{figure}

\begin{figure}
    \centering
    \includegraphics[width=0.49\textwidth]{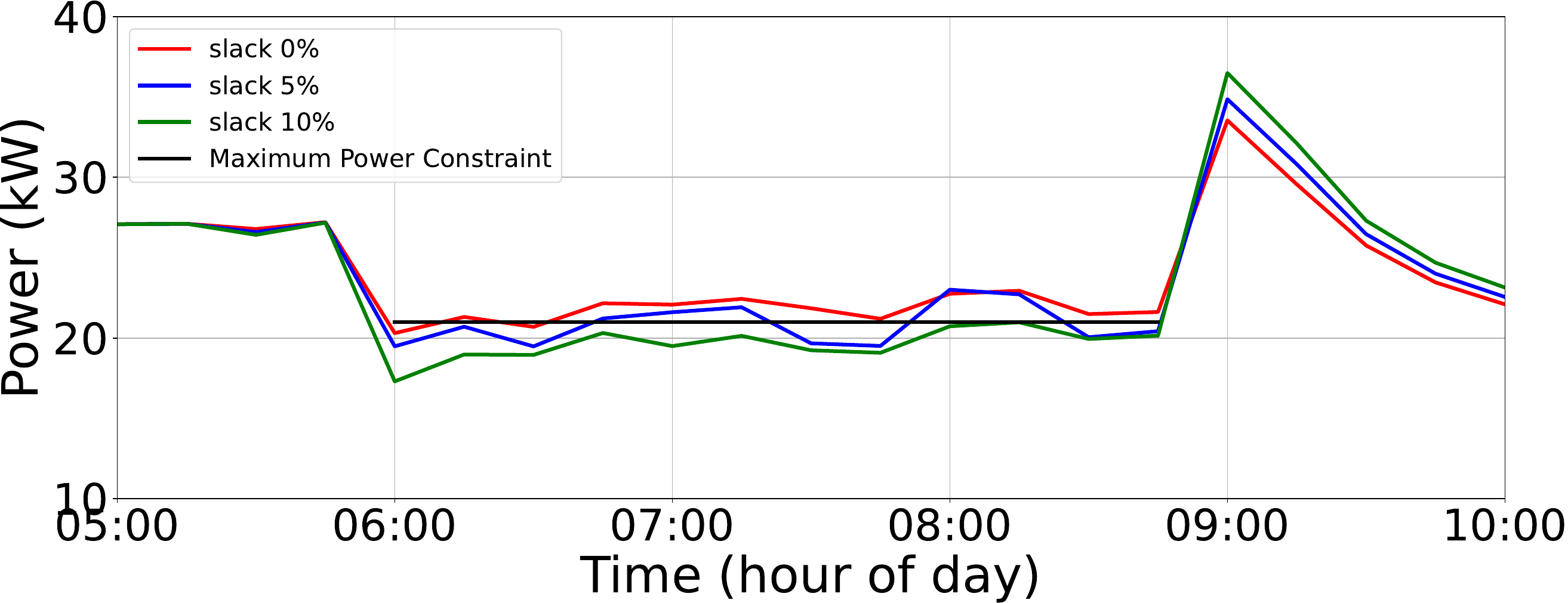}
    \caption{Effect of the slack parameter $\nu$ of the Deterministic DPN control.}
    \label{fig:ddpn_slakc_comparison}
\end{figure}

\begin{figure}
    \centering
    \includegraphics[width=0.49\textwidth]{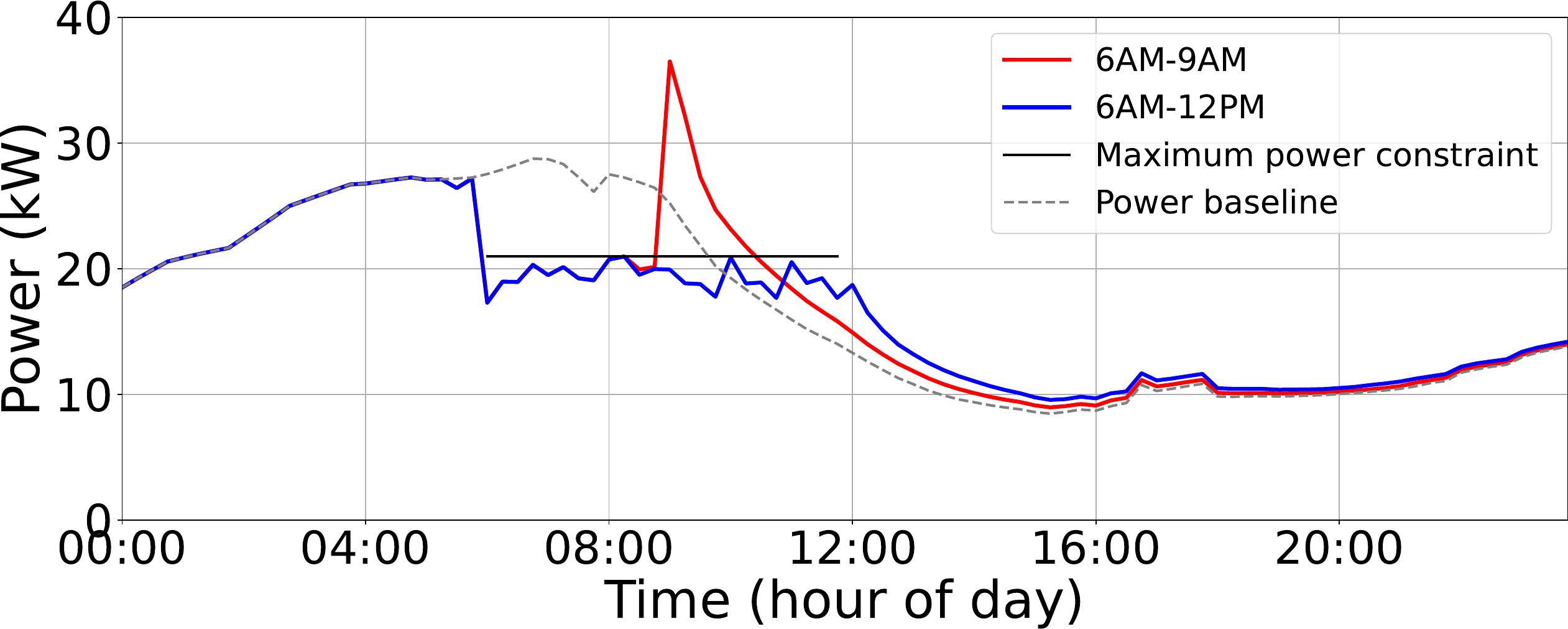}
    \caption{Mitigation of the rebound peak for the Deterministic DPN algorithm. For the red curve the event ends at 9 AM while for the blue curve it ends at 12 AM.}
    \label{fig:ddpn_rebound_peak}
\end{figure}

\begin{table*}
\centering
\resizebox{\textwidth}{!}{
\begin{tabular}{l|l|l|l|l|l|l|l|l|l|l}
\hline
 \multirow{10}{*}{\rotatebox[origin=c]{90}{Deterministic DPN}} & zone       & mean $(^\circ C)$   &  $\delta=0$     & $ 0 <\delta \leq 1$  & $ 1 <\delta \leq 2$    & zone        & mean $(^\circ C)$  & $\delta=0$    & $ 0 <\delta \leq 1$  & $ 1 <\delta \leq 2$   \\ \cline{2-11}
 & \textbf{1} & -0.72 & 38     & 20 & 42    & \textbf{10} & -0.50 & 39     & 39  & 21 \\ 
& \textbf{2} & -0.47  & 45     & 32 & 23 & \textbf{11} & -0.54 & 33 & 39  & 27  \\ 
& \textbf{3} & -0.72  & 33     & 29  & 38 & \textbf{12} & -0.71 & 41     & 20 & 39  \\ 
& \textbf{4} & -0.86  & 27  & 30  & 42 & \textbf{13} & -0.89 & 35     & 9  & 56 \\ 
& \textbf{5} & -0.58  & 47     & 21 & 32 & \textbf{14} & -0.37 & 39 & 52  & 9 \\ 
& \textbf{6} & -0.44  & 45     & 26  & 29 & \textbf{15} & -0.58 & 45     & 26  & 29 \\ 
& \textbf{7} & -0.48  & 36  & 41 & 23 & \textbf{16} & -0.73 & 23     & 39  & 38  \\ 
& \textbf{8} & -0.52  & 35 & 42 & 23 & \textbf{17} & -0.81 & 35     & 15 & 50 \\ 
& \textbf{9} & -0.81  & 33 & 21  & 45 & \textbf{18} & -0.55 & 36     & 36 & 27 \\ \hline
\multirow{10}{*}{\rotatebox[origin=c]{90}{Stochastic DPN}} & zone       & mean $(^\circ C)$   &  $\delta=0$     & $ 0 <\delta \leq 1$  & $ 1 <\delta \leq 2$    & zone        & mean $(^\circ C)$  & $\delta=0$    & $ 0 <\delta \leq 1$  & $ 1 <\delta \leq 2$   \\ \cline{2-11}
 & \textbf{1} & -1.14 & 12     & 20 & 68    & \textbf{10} & -1.11 & 12   & 17  & 71 \\ 
& \textbf{2} & -1.25  & 14     & 14 & 73 & \textbf{11} & -0.92 & 12 & 33  & 55  \\ 
& \textbf{3} & -1.18  & 11     & 23  & 67 & \textbf{12} & -1.23 & 11     & 12 & 76  \\ 
& \textbf{4} & -1.12  & 15  & 18  & 67 & \textbf{13} & -1.23 & 12     & 20  & 70 \\ 
& \textbf{5} & -1.06  & 15     & 20 & 65 & \textbf{14} & -1.21 & 12 & 15  & 73 \\ 
& \textbf{6} & -1.20  & 12     & 14  & 74 & \textbf{15} & -1.22 & 12    & 18  & 70 \\ 
& \textbf{7} & -1.13  & 15  & 18 & 67 & \textbf{16} & -1.33 & 11     & 09  & 80  \\ 
& \textbf{8} & -1.13  & 14 & 09 & 77 & \textbf{17} & -1.25 & 12    & 15 & 73 \\
& \textbf{9} & -1.12  & 15 & 18  & 67 & \textbf{18} & -1.24 & 11     & 12 & 77 \\
\hline
\end{tabular}}
\caption{Setpoint changes distribution for each zone during the DR events requiring actions. For the SDPN, results are consistent across all the seeds, for simplicity the results of a single seed are presented. For the DDPN, $\nu = 10\%$.}
\label{tab:dr_temperatures}
\end{table*}

Table \ref{tab:constraint_violation} shows the percentage of timesteps where the power constraint was not satisfied for each of the DR events requiring actions. When the slack parameter $\nu = 0 \%$ the algorithm aims at consuming exactly $P^{max}$ when the predicted business-as-usual power exceeds $P^{max}$, which often leads to constraint violations. We observe that increasing the slack makes the DDPN algorithm aim below the constraint, which decreases the percentage of constraint violation. It is interesting to notice that for the DDPN with slacks of $5\%$ and $10\%$, the constraints are always satisfied in predictions. It means that the control algorithm is working perfectly and constraint violations are due to prediction errors only. Increasing the slack helps mitigate the prediction errors but also increases the power reduction, thus impacting the comfort. A balance must be made between constraint satisfaction and comfort. Fig. \ref{fig:ddpn_slakc_comparison} illustrates the effect of the slack parameter on the total power consumption. For the SDPN performances, the optimization is based on the worst-case scenario and $\nu$ is set to $0\%$ to avoid excessive conservatism. The algorithm thus aims at consumption $P^{max}$, which explains the high number of constraint violations in prediction. Despite optimizing based on the worst case, the algorithm still aims at consuming exactly the maximum power available, which may lead to constraint violations. This can easily mitigated by using non-zero slack. To illustrate this, we re-run the second seed that has the most constraint violation with a slack of $5\%$. The column "$2$ $(\nu=5\%)$" in Table \ref{tab:constraint_violation} shows that every event was satisfied with this setting.

Fig. \ref{fig:ddpn_sdpn_control_results} compares the actual power consumption and the predicted power consumption using the DPN algorithms against the baseline power consumption from the built-in EnergyPlus controller. On the first day displayed, the predicted power consumption is around $(1-\nu) P^{max}$ for the DDPN algorithm. However, for one timestep, we observe that the consumed power exceeds the maximum. This one-timestep prediction error can happen even with excellent average prediction performances and illustrates the importance of prediction accuracy. On the same day, we see that the SDPN algorithm takes greater setpoint reductions to satisfy the constraint. It is important to notice that our problem formulation with a hard constraint on the power is the most challenging setting because it implies extremely accurate predictions to avoid an over-conservative algorithm. That said, depending on the pricing, a hard constraint is not always required. Since the DPN algorithm is based on the optimization problem (\ref{opt:const_problem}) it may also be used as is to effectively limit the power consumption without a hard constraint on the power. On the third day displayed in Fig. \ref{fig:ddpn_sdpn_control_results}, we observe the ability of DDPN to slightly adjust the temperature to satisfy the constraint. The SDPN algorithm is more conservative and takes greater reductions. One possible drawback of using a large slack value is to take unnecessary actions to lower the consumption, as shown on the second day in Fig. \ref{fig:ddpn_control_results}. As illustrated in Fig. \ref{fig:ddpn_sdpn_control_results}, a rebound peak may occur after the DR event because the setpoints are increased at the same time in every zone. This can be mitigated by increasing the duration of the power constraint to restart the heating systems later in the day when the total power consumption is lower, as shown in Fig. \ref{fig:ddpn_rebound_peak}.

Regarding the setpoint changes, Table \ref{tab:dr_temperatures} shows that each zone participates in the power reduction effort. While there is nothing explicitly enforcing fairness in our algorithm, the similar mean setpoint changes across the zones may be explained by the uniform initialization of ADMM, which first sets a setpoint change of $-1^\circ C$ in each zone. We also observe that the setpoint changes are larger with SDPN because of the robust optimization. 

\subsubsection{DPN convergence}

Fig. \ref{fig:admm_convergence} shows the mean and standard deviation of the residuals, taken over the first 20 iterations of each ADMM usage. In both cases, the residuals are decreasing and at each iteration the algorithm converges to a solution. Concerning computational time, resolving the LC problems consumes the most time. As shown in Table \ref{tab:computation_time}, for DDPN, LC computation accounts for nearly 98\% of an iteration duration. This process is conducted in parallel for each zone and, with sufficient computing resources, is independent of the number of zones, making the algorithm scalable to the size of the building. We also notice that the SDPN is significantly faster. This is because the action space is low-dimensional and state trajectories are precomputed, as explained in Section \ref{subsubsec:SDPN}. That said, considering longer horizons or larger comfort intervals would increase exponentially the action space dimension. Exploring larger action spaces randomly becomes rapidly computationally intractable, unlike the gradient-based search used in DDPN.

\begin{figure}
\centering
\begin{subfigure}[b]{0.49\textwidth}
    \includegraphics[width=\textwidth]{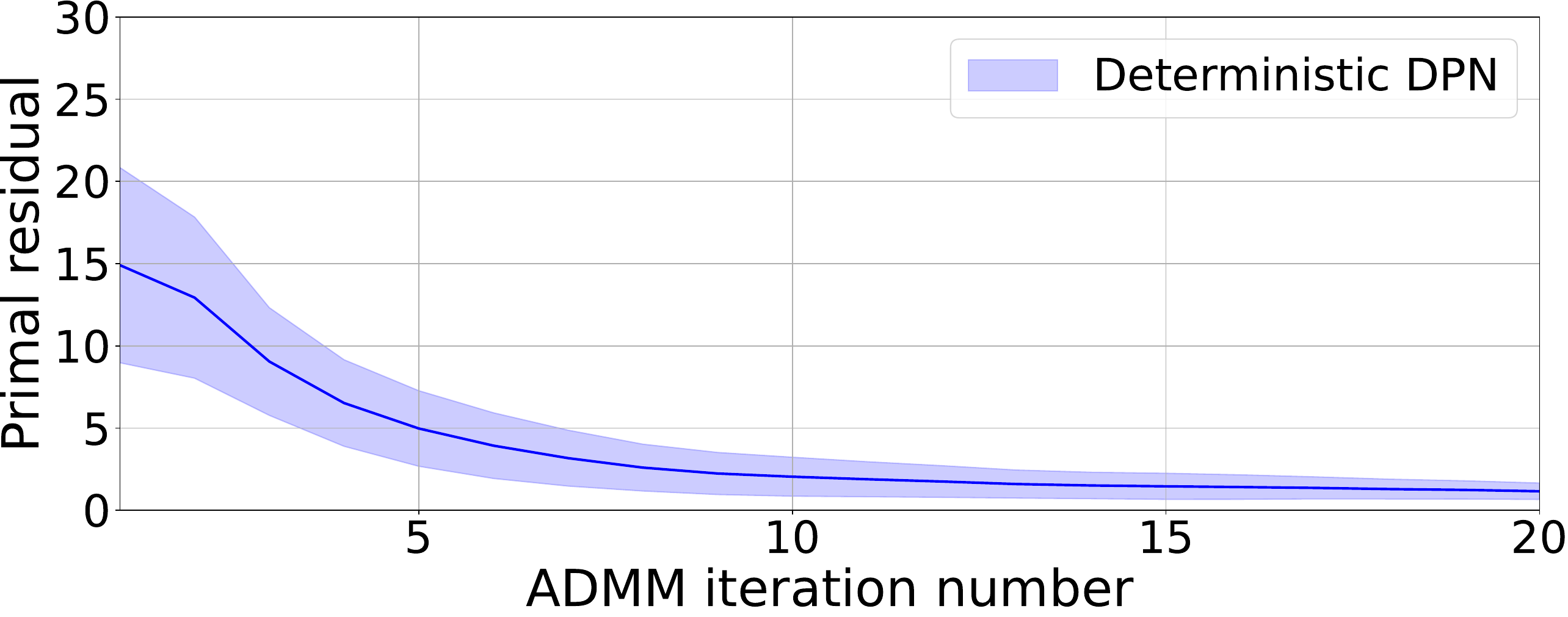}
    \caption{DDPN}
    \label{fig:admm_convergence_ddpn}
\end{subfigure}
\vspace{0.5cm}

\begin{subfigure}[b]{0.49\textwidth}
    \includegraphics[width=\textwidth]{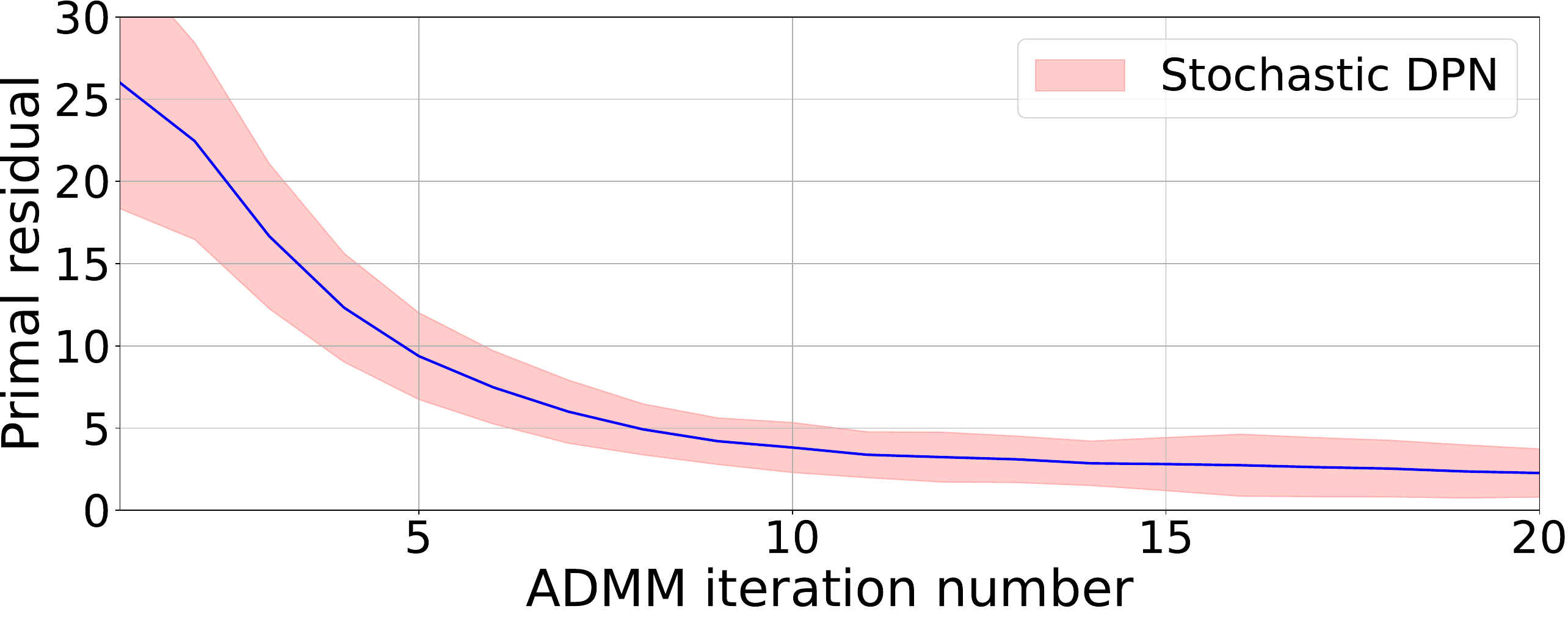}
    \caption{SDPN}
    \label{fig:admm_convergence_sdpn}
\end{subfigure}
\caption{Deterministic and Stochastic DPN convergence}
\label{fig:admm_convergence}
\end{figure}

\begin{table}
\centering
\begin{tabular}{l|l|l}
\toprule
                    & Deterministic DPN & Stochastic DPN \\ \midrule
DPN call   &    160.0 $\pm$ 42.04  &   14.43 $\pm$ 0.39    \\ 
Coordinator iteration    &   0.11 $\pm$ 0.02    &   0.09 $\pm$ 0.01    \\ 
LC iteration  &    3.05  $\pm$ 0.79    &    0.03 $\pm$ 0.0  \\
\bottomrule
\end{tabular}
\caption{Mean $\pm$ std of the computation time in seconds for the entire DPN algorithm, the coordinator and local controllers problem. Note that each local controller was running on a 2-core CPU with 4GB of RAM.}
\label{tab:computation_time}
\end{table}

\color{black}


\section{Discussion and further work}
\label{sec:discussion_and_further_work}

The results presented above demonstrate the effectiveness of the algorithm to distribute a global coupled power constraint to local heating systems in a multi-zone building.

The experiments have revealed that a low prediction error is key to the success of the method. Prediction errors are not an issue if one aims at maintaining the power around a target value. However, most of the constraints implying a maximum power are hard constraints and an inaccurate forecast will cause the constraint to be violated. \textcolor{black}{The maximum prediction error could easily exceed the mean error of $4.16\%$ or $6.84\%$ reached by our models, especially after setpoint changes when the power consumption varies sharply}. To improve the accuracy of the models, one may include the state of the neighboring zones in the observations of a zone. Another possibility is to retrain the models on data collected during the control period. Such data are more representative of encountered states than the data coming from random control of the temperature setpoints used to first train the models.
\vspace{0.2cm}

An interesting extension of this work would be to measure thermal comfort based on predictions of the temperature of each zone instead of on the temperature setpoints. The local objectives $g$, defined in Eq. (\ref{opt:sharing_problem}), would become 

$$g(\boldsymbol{\Delta}_i) = \norm{T_{t,i} - T^{sp}_{t,i}}_2^2,$$

where $T_{t,i}$ is the temperature in zone $i$ at time $t$. \textcolor{black}{Such a formulation would permit pre-heating strategies and further enhance energy conservation.} However, it complicates the theoretical analysis because such a $g$ function is no longer convex. In addition, some preliminary experiments suggest that it is hard to accurately predict the temperature with fully data-driven models. If the temperature setpoint is passed as an input, the models often learn to predict the setpoint as the temperature forecast. The use of physics-informed models may be necessary to ensure accurate temperature predictions \cite{Drgona2021, Djeunou2022}. \textcolor{black}{In addition, our experiments showed that relaxing the inequality constraint of Problem (\ref{opt:original_control}) with the symmetric term $\displaystyle ||A\boldsymbol{u} - \boldsymbol{P}^{tot}||_2^2$ can ensure the constraint satisfaction given the right choice of $\bold{P}^{tot}$ and normalized variables $\boldsymbol{u}$. For the algorithm to converge, the function $\ell$ needs to have a Lipschitz continuous gradient so an interesting research direction is to use an asymmetric function such as a log-barrier to relax the inequality constraint. The convergence proof will still hold and it could remove the need for a slack parameter. However, the coordinator problem will no longer be a simple penalized least-square problem and will be harder to solve.}

We trained all the prediction models from scratch. However, there are many similarities from one zone to the other. An interesting improvement for this work will be to leverage the training of a model in one zone to accelerate the training of other models. Existing techniques of Meta-Learning could be used for this purpose. This will greatly improve the generalization of the approach as models from some buildings could be used for new buildings.

Finally, tests are all performed using a virtual environment. Even though EnergyPlus provides state-of-the-art accuracy for building simulation, future works should include tests on real buildings to further validate the method.


\section{Conclusion}

In this work, we present a distributed algorithm that combines ADMM and Deep Learning models to control the temperature setpoints and act on the power consumption of HVAC systems in buildings. Based on a global coupled power constraint, the algorithm distributes the power across the Local Controller systems. This approach does not assume any particular type of local heating or cooling system and learns its model entirely from data. We provided a theoretical framework and derived a practical algorithm that we implemented to test the efficiency of the approach. Tests are performed on a large residential building composed of 18 zones, modeled using EnergyPlus. By combining the zones' prediction models, we reached a mean error of $4.16\%$ with SSM and $6.84\%$ with RSSM on the prediction of the total heating power consumption for the $1h$ ahead horizon used for planning. Both the deterministic and stochastic versions of the Distributed Planning Networks algorithm are tested on a power control task during DR events. The power is successfully reduced below a given maximum power limit and the effort is shared across all zones. 

\bibliographystyle{./bibliography/IEEEtran}


\appendix
\section{Appendix}

\subsection{Proof of Theorem 1}
\label{subsec:proof_theorem_1}

For convenience, let us re-state here the problem. Note that to ease the notation we rename the decision variables of the problem by $\boldsymbol{x}$ instead of $\boldsymbol{\Delta}$. Recall that for $i = 1, \ldots, N$, we have $\boldsymbol{x}_i \in \mathbb{R}^H$, $\boldsymbol{\Bar{x}}_i \in \mathbb{R}^H$, $g(\boldsymbol{x}_i) = \|\boldsymbol{x}_i\|^2_2$ and $B_i \in \mathbb{R}^{NH \times H}$ such that $\boldsymbol{x} = \displaystyle \sum_{i=1}^N B_i \boldsymbol{x}_i$, where $\boldsymbol{x}\in \mathbb{R}^{NH}$. In the following, to ease the notation we drop the subscript to write $\{\boldsymbol{x}_i\}_{i=1,\ldots, N} = \{\boldsymbol{x}_i\}$. 

Consider the following problem :

\begin{align}
    &\underset{\boldsymbol{x}_1, \ldots, \boldsymbol{x}_N}{\text{minimize}}  \sum_{i=1}^N g(\boldsymbol{x}_i) +  \ell\left(\sum_{i=1}^N B_i \boldsymbol{\Bar{x}}_i \right) \label{eq:opt_problem_appendix} \\
    &\text{subject to: } & \\
    &\boldsymbol{\Bar{x}}_i = \boldsymbol{x}_i, \quad i=1, \ldots, N \\
    &\boldsymbol{x}_i \in \chi_i,  \quad i=1, \ldots, N
\end{align}
 
Where $\chi_i$ is some closed convex set. The problem is solved using Algorithm \ref{alg:non_convex_admm}. The augmented Lagrangian of this problem is 

\begin{align}
    \mathcal{L}(\{\boldsymbol{x}_i\}, \{\Bar{\boldsymbol{x}_i}\}, \{\boldsymbol{\lambda}_i\})  = \sum_{i=1}^N g(\boldsymbol{x}_i) + \ell \left(\sum_{i=1}^N B_i \Bar{\boldsymbol{x}}_i \right) \\
     + \sum_{i=1}^N \boldsymbol{\lambda}_i^T (\Bar{\boldsymbol{x}}_i-\boldsymbol{x}_i)  + \sum_{i=1}^N \frac{\rho}{2} \|\Bar{\boldsymbol{x}}_i-\boldsymbol{x}_i\|^2_2. \nonumber
\end{align}

The proof follows the same steps as in \cite{Hong2014}, but some differences arise because we are considering $N$ duplicated variables. Yet, as we show, the theorem holds mainly because of the orthogonality of the duplicated variables in $\mathbb{R}^{NH}$. In addition, note that compared to \cite{Hong2014} we consider an update of every variable at each iteration.

First, we state the following important properties :

\begin{proposition}
The function $g$ is convex.
\end{proposition}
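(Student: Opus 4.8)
The plan is to verify convexity of $g(\boldsymbol{x}_i) = \|\boldsymbol{x}_i\|_2^2$ directly, exploiting the fact that $g$ is a smooth positive-definite quadratic form on $\mathbb{R}^H$. First I would write $g(\boldsymbol{x}) = \sum_{j=1}^H x_j^2 = \boldsymbol{x}^T \boldsymbol{x}$, compute the gradient $\nabla g(\boldsymbol{x}) = 2\boldsymbol{x}$ and the Hessian $\nabla^2 g(\boldsymbol{x}) = 2 I_H$, and then invoke the standard second-order characterization of convexity: a twice continuously differentiable function on $\mathbb{R}^H$ is convex if and only if its Hessian is positive semidefinite everywhere. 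Since $2 I_H \succ 0$ uniformly in $\boldsymbol{x}$, this immediately yields convexity, and in fact strong convexity with modulus $2$, which is the stronger property that will actually be used later when adapting the argument of \cite{Hong2014} to the $x_i$-subproblem on line 5 of Algorithm \ref{alg:non_convex_admm}.

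For a more self-contained route that avoids appealing to differentiability, I would instead verify the defining inequality directly. For $\theta \in [0,1]$ and $\boldsymbol{x}, \boldsymbol{y} \in \mathbb{R}^H$, expanding $g(\theta \boldsymbol{x} + (1-\theta)\boldsymbol{y})$ and subtracting $\theta g(\boldsymbol{x}) + (1-\theta) g(\boldsymbol{y})$ collapses, after cancellation, to $-\theta(1-\theta)\norm{\boldsymbol{x}-\boldsymbol{y}}_2^2 \le 0$, which is exactly the convexity inequality. A third equally short option is to note that the Euclidean norm is convex (by the triangle inequality and absolute homogeneity) and that $t \mapsto t^2$ is convex and nondecreasing on $[0,\infty)$, so the composition $g = (\,\cdot\,)^2 \circ \norm{\cdot}_2$ is convex.

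There is \emph{no genuine obstacle} here: $g$ is one of the textbook examples of a strongly convex function, and any of the three arguments above closes the proof in a line or two. The only point worth flagging is why it merits being stated as a named proposition at all, namely that convexity (indeed strong convexity) of the local objective $g$ is precisely the structural ingredient the convergence analysis needs, and that the result must be recorded here because the subsequent proof departs from \cite{Hong2014} by carrying $N$ duplicated variables rather than one.
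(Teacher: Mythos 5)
Your proof is correct; in fact the paper states this proposition without any proof at all, treating the convexity of $g(\boldsymbol{x}_i) = \|\boldsymbol{x}_i\|_2^2$ as a standard textbook fact. Any one of your three arguments (constant Hessian $2I_H \succeq 0$, direct verification of the convexity inequality, or composition of $t \mapsto t^2$ with the norm) closes the gap, and your observation that $g$ is actually strongly convex is consistent with how the paper later uses Assumption A.1 to get strong convexity of the subproblem on line 5 of Algorithm 1.
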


\begin{proposition}
For $i =1, \ldots, N$ and  $k \in \mathbb{N}$,

$ \displaystyle \| B_i \boldsymbol{x}_i^{k+1} - B_i \boldsymbol{x}_i^k \|_2 = \| \boldsymbol{x}_i^{k+1} - \boldsymbol{x}_i^k \|_2$. 
\end{proposition}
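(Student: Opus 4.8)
The plan is to exploit the special block structure of the embedding matrices $B_i$. By the definition $\boldsymbol{x} = \sum_{i=1}^N B_i \boldsymbol{x}_i$ together with the indexing convention fixed for $\boldsymbol{\Delta}$, each $B_i \in \mathbb{R}^{NH \times H}$ merely copies the $H$ entries of the zone-$i$ vector $\boldsymbol{x}_i$ into the coordinates of the global vector $\boldsymbol{x}$ that belong to zone $i$ (positions $i, N+i, \ldots, (H-1)N+i$), placing zeros in all other coordinates. Consequently every column of $B_i$ is a distinct canonical basis vector of $\mathbb{R}^{NH}$, so the columns of $B_i$ are orthonormal. Equivalently, $B_i^T B_i = I_H$, the $H \times H$ identity matrix.

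First I would record the identity $B_i^T B_i = I_H$ as the single structural fact driving the argument. This is the only step that genuinely inspects how the decomposition was constructed, rather than manipulating symbols, and it is where the orthogonality of the duplicated variables enters. Once it is in hand, the statement follows by a one-line isometry computation.

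Concretely, setting $\boldsymbol{v} = \boldsymbol{x}_i^{k+1} - \boldsymbol{x}_i^k$ and using linearity of $B_i$, I would write
$$\| B_i \boldsymbol{x}_i^{k+1} - B_i \boldsymbol{x}_i^k \|_2^2 = \| B_i \boldsymbol{v} \|_2^2 = \boldsymbol{v}^T B_i^T B_i \boldsymbol{v} = \boldsymbol{v}^T \boldsymbol{v} = \| \boldsymbol{v} \|_2^2,$$
and conclude by taking nonnegative square roots.

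I do not anticipate any real obstacle: the entire content is that each $B_i$ is a coordinate-selection (partial permutation) matrix and is therefore an isometry on $\mathbb{R}^H$. The remark in the excerpt that the theorem holds ``mainly because of the orthogonality of the duplicated variables in $\mathbb{R}^{NH}$'' is precisely this observation; the present proposition isolates its per-block form, $B_i^T B_i = I_H$, whereas the complementary cross-block orthogonality $B_i^T B_j = 0$ for $i \neq j$ will presumably be invoked later in the convergence analysis.
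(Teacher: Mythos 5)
Your proof is correct and rests on the same observation as the paper's: $B_i$ merely embeds the entries of $\boldsymbol{x}_i$ into the global vector and pads with zeros, hence preserves the Euclidean norm. Your formalization via $B_i^T B_i = I_H$ is just a slightly more explicit rendering of the paper's one-line argument.
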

\begin{proof}
    The vector $B_i \boldsymbol{x}_i^{k}$ has zero entries except in the positions $iH$ to $(i+N)H$ where it has the entries of $\boldsymbol{x}_i^k$.
\end{proof}

\begin{proposition} 
  \label{prop:orthogonal_xi}
  For $k \in \mathbb{N}$, 
  
  $\displaystyle \| \Bar{\boldsymbol{x}}^{k+1} - \Bar{\boldsymbol{x}}^{k} \|_2^2 = \sum_{i=1}^N \| \Bar{\boldsymbol{x}}_i^{k+1} - \Bar{\boldsymbol{x}}_i^{k} \|_2^2$
\end{proposition}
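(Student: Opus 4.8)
The plan is to exploit the block structure of the placement matrices $B_i$, exactly as in the proof of Proposition 4.2. The crucial observation is that each $B_i \in \mathbb{R}^{NH \times H}$ embeds the $H$-dimensional block $\Bar{\boldsymbol{x}}_i$ into the coordinates reserved for zone $i$ in the stacked vector $\Bar{\boldsymbol{x}} = \sum_{i=1}^N B_i \Bar{\boldsymbol{x}}_i$, and these coordinate blocks are disjoint across $i$. Consequently the columns of $B_i$ and $B_j$ are mutually orthogonal whenever $i \neq j$, a fact I would record as the identities $B_i^T B_j = \boldsymbol{0}$ for $i \neq j$ and $B_i^T B_i = I_H$.

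First I would use linearity of the embedding to write the increment as
$$ \Bar{\boldsymbol{x}}^{k+1} - \Bar{\boldsymbol{x}}^{k} = \sum_{i=1}^N B_i \left( \Bar{\boldsymbol{x}}_i^{k+1} - \Bar{\boldsymbol{x}}_i^{k} \right). $$
Then I would expand the squared Euclidean norm as a double sum of inner products,
$$ \left\| \Bar{\boldsymbol{x}}^{k+1} - \Bar{\boldsymbol{x}}^{k} \right\|_2^2 = \sum_{i=1}^N \sum_{j=1}^N \left( \Bar{\boldsymbol{x}}_i^{k+1} - \Bar{\boldsymbol{x}}_i^{k} \right)^T B_i^T B_j \left( \Bar{\boldsymbol{x}}_j^{k+1} - \Bar{\boldsymbol{x}}_j^{k} \right), $$
and apply the orthogonality relations. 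Using $B_i^T B_j = \boldsymbol{0}$ for $i \neq j$ and $B_i^T B_i = I_H$ annihilates every off-diagonal term and collapses the double sum to $\sum_{i=1}^N \| \Bar{\boldsymbol{x}}_i^{k+1} - \Bar{\boldsymbol{x}}_i^{k} \|_2^2$, which is exactly the claim.

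The only step that needs any care is justifying the orthogonality relations $B_i^T B_j = \boldsymbol{0}$ for $i \neq j$ and $B_i^T B_i = I_H$, and this is the same fact already invoked for Proposition 4.2: each $B_i$ consists of an $H \times H$ identity block in the rows indexed by zone $i$ and zeros elsewhere, so $B_i^T B_j$ measures the overlap of the zone-$i$ and zone-$j$ row blocks, which is empty when $i \neq j$ and the full identity when $i = j$. I do not expect a genuine obstacle here; the identity is a direct consequence of the orthogonal, block-diagonal geometry of the duplicated variables in $\mathbb{R}^{NH}$, which the authors single out as the reason the overall convergence argument carries over from \cite{Hong2014}.
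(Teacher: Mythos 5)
Your proof is correct and follows essentially the same route as the paper's: both rest on the observation that the embeddings $B_i\Bar{\boldsymbol{x}}_i$ occupy disjoint coordinate blocks of $\mathbb{R}^{NH}$, so the cross terms in the expansion of the squared norm vanish and each diagonal term reduces to $\|\Bar{\boldsymbol{x}}_i^{k+1}-\Bar{\boldsymbol{x}}_i^k\|_2^2$ via the isometry $\|B_i v\|_2=\|v\|_2$. Your explicit use of the identities $B_i^T B_j = \boldsymbol{0}$ for $i\neq j$ and $B_i^T B_i = I_H$ is just a more detailed rendering of the Pythagorean step the paper states directly.
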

\begin{proof}
    The vectors $\{B_i \boldsymbol{x}_i\}_{i=1,\ldots, N}$ are orthogonal, which yields 
    \begin{align}
        \| \Bar{\boldsymbol{x}}^{k+1} - \Bar{\boldsymbol{x}}^{k} \|_2^2 & = \left \| \sum_{i=1}^N B_i (\Bar{\boldsymbol{x}}_i^{k+1} - \Bar{\boldsymbol{x}}_i^{k} ) \right \|_2^2 \\
        & = \sum_{i=1}^N \left \| B_i (\Bar{\boldsymbol{x}}_i^{k+1} - \Bar{\boldsymbol{x}}_i^{k} ) \right \|_2^2 \\
        & = \sum_{i=1}^N \left \| \Bar{\boldsymbol{x}}_i^{k+1} - \Bar{\boldsymbol{x}}_i^{k} \right \|_2^2 
    \end{align}
\end{proof}

In addition, we make the following assumptions :

\begin{assumption}
\label{assum:convex_pb}
The penalty parameter $\rho$ is chosen large enough such that the problems of Algorithm \ref{alg:non_convex_admm} Line 5 and 7 are strongly convex with parameters $\{\gamma_i\}$ and $\Bar{\gamma}$ respectively. 
\end{assumption}
See \cite{BERTSEKAS19821} for more details about Assumption \ref{assum:convex_pb}.

\begin{assumption}
\label{assum:lips_grad}

\begin{enumerate}
    \item We suppose that $\ell$ has a Lipschitz continuous gradient, i.e. there exists a $L > 0$ such that for all $\textbf{x}$ and $\textbf{y}$, $ \displaystyle \|\nabla \ell(\textbf{x}) -  \nabla \ell(\textbf{y}) \|_2 \leq L \|\textbf{x}-\textbf{y} \|_2. $
    
    \item We suppose $\rho \geq L$ and $\rho \bar{\gamma} \geq 2 L^2$, where $\bar{\gamma}$ is the strong convexity parameter defined in Assumption \ref{assum:convex_pb}.
\end{enumerate}
\end{assumption}

\begin{assumption}
\label{assum:f_bounded}
The function $ \displaystyle f(\boldsymbol{x}_1, \ldots, \boldsymbol{x}_N) = \sum_{i=1}^N g(\boldsymbol{x}_i) + \ell\left(\sum_{i=1}^N B_i \boldsymbol{x}_i \right) $ is lower bounded over $\displaystyle \prod_{i=1}^N \chi_i$.
\end{assumption}

With this framework, we demonstrate the following lemmas: 

\begin{lemma}
\label{lemma:1}
For $i=1, \ldots, N$ 

\begin{equation}
    B_i^T \nabla \ell \left( \Bar{\boldsymbol{x}} \right) = - \boldsymbol{\lambda}_i^{k+1} \label{lemma1:eq1}
\end{equation}

and 

\begin{equation}
     L^2 \|\ \Bar{\boldsymbol{x}}_i^{k+1} - \Bar{\boldsymbol{x}}_i^{k} \|^2_2  \geq \|\boldsymbol{\lambda}_i^{k+1} - \boldsymbol{\lambda}_i^{k} \|^2_2
\end{equation}
    
where $\nabla_{\Bar{\boldsymbol{x}}} \ell \in \mathbb{R}^{NH}$ is the gradient of $\ell$ with respect to $ \displaystyle \Bar{\boldsymbol{x}}= \sum_{i=1}^N B_i \Bar{\boldsymbol{x}}_i$.
\end{lemma}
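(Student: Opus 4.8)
The plan is to obtain both statements directly from the first–order optimality condition of the coordinator update (Line~7 of Algorithm~\ref{alg:non_convex_admm}) combined with the dual update (Line~9), and then to exploit the isometry and orthogonality properties of the selection matrices $B_i$ recorded in the preceding propositions (in particular Proposition~\ref{prop:orthogonal_xi}). Throughout I read $\bar{\boldsymbol{x}}$ in (\ref{lemma1:eq1}) as the current iterate $\bar{\boldsymbol{x}}^{k+1}$.

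For the identity (\ref{lemma1:eq1}), I would write the stationarity condition of the coordinator subproblem, which under Assumption~\ref{assum:convex_pb} is a strongly convex unconstrained minimization in $\bar{\boldsymbol{x}}$ with a unique minimizer. Differentiating its objective with respect to the block $\bar{\boldsymbol{x}}_i$ via the chain rule $\nabla_{\bar{\boldsymbol{x}}_i}\,\ell(\sum_j B_j\bar{\boldsymbol{x}}_j) = B_i^T\nabla\ell(\bar{\boldsymbol{x}})$ and setting the result to zero at $\bar{\boldsymbol{x}}^{k+1}$ gives $B_i^T\nabla\ell(\bar{\boldsymbol{x}}^{k+1}) + \boldsymbol{\lambda}_i^k + \rho(\bar{\boldsymbol{x}}_i^{k+1}-\boldsymbol{x}_i^{k+1}) = 0$. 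Substituting the dual update $\boldsymbol{\lambda}_i^{k+1} = \boldsymbol{\lambda}_i^k + \rho(\bar{\boldsymbol{x}}_i^{k+1}-\boldsymbol{x}_i^{k+1})$ collapses the last two terms into $\boldsymbol{\lambda}_i^{k+1}$, which yields $B_i^T\nabla\ell(\bar{\boldsymbol{x}}^{k+1}) = -\boldsymbol{\lambda}_i^{k+1}$.

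For the bound, I would apply (\ref{lemma1:eq1}) at two consecutive iterations and subtract, obtaining $\boldsymbol{\lambda}_i^{k+1}-\boldsymbol{\lambda}_i^k = -B_i^T\big(\nabla\ell(\bar{\boldsymbol{x}}^{k+1})-\nabla\ell(\bar{\boldsymbol{x}}^k)\big)$. Since the columns of $[B_1\,|\,\cdots\,|\,B_N]$ form a permutation of the standard basis of $\mathbb{R}^{NH}$, one has $B_i^T B_j = \delta_{ij} I_H$ and $\sum_i B_i B_i^T = I$, so $\sum_i \|B_i^T\boldsymbol{v}\|_2^2 = \|\boldsymbol{v}\|_2^2$ for every $\boldsymbol{v}$. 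Summing the squared dual differences over $i$ then gives $\sum_i\|\boldsymbol{\lambda}_i^{k+1}-\boldsymbol{\lambda}_i^k\|_2^2 = \|\nabla\ell(\bar{\boldsymbol{x}}^{k+1})-\nabla\ell(\bar{\boldsymbol{x}}^k)\|_2^2$; the Lipschitz hypothesis of Assumption~\ref{assum:lips_grad} bounds this by $L^2\|\bar{\boldsymbol{x}}^{k+1}-\bar{\boldsymbol{x}}^k\|_2^2$, and Proposition~\ref{prop:orthogonal_xi} rewrites the right-hand side as $L^2\sum_i\|\bar{\boldsymbol{x}}_i^{k+1}-\bar{\boldsymbol{x}}_i^k\|_2^2$.

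The main obstacle is the gap between this summed estimate and the per-block inequality as literally stated: because $\ell$ couples all blocks through $\sum_i B_i\bar{\boldsymbol{x}}_i$, the $i$-th block of the gradient difference cannot in general be controlled by $\bar{\boldsymbol{x}}_i$ alone, so Lipschitz continuity does not deliver a genuine term-by-term bound. I would resolve this by reading the claim in its summed form, which is the form actually consumed by the subsequent descent estimates, and I would make explicit that only $\sum_i\|\boldsymbol{\lambda}_i^{k+1}-\boldsymbol{\lambda}_i^k\|_2^2 \le L^2\sum_i\|\bar{\boldsymbol{x}}_i^{k+1}-\bar{\boldsymbol{x}}_i^k\|_2^2$ is needed downstream; alternatively, a coordinate-wise Lipschitz strengthening of Assumption~\ref{assum:lips_grad} would license the term-by-term version directly.
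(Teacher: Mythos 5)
Your derivation of the identity (\ref{lemma1:eq1}) is exactly the paper's: write the stationarity condition of the coordinator subproblem blockwise via the chain rule, then absorb $\boldsymbol{\lambda}_i^k + \rho(\bar{\boldsymbol{x}}_i^{k+1}-\boldsymbol{x}_i^{k+1})$ into $\boldsymbol{\lambda}_i^{k+1}$ using the dual update. For the inequality, the paper also subtracts (\ref{lemma1:eq1}) at consecutive iterations, but then passes directly from $\|B_i^T(\nabla\ell(\bar{\boldsymbol{x}}^{k+1})-\nabla\ell(\bar{\boldsymbol{x}}^{k}))\|_2^2=\|\boldsymbol{\lambda}_i^{k+1}-\boldsymbol{\lambda}_i^{k}\|_2^2$ to the per-block bound $L^2\|\bar{\boldsymbol{x}}_i^{k+1}-\bar{\boldsymbol{x}}_i^{k}\|_2^2\geq\|\boldsymbol{\lambda}_i^{k+1}-\boldsymbol{\lambda}_i^{k}\|_2^2$ with no further justification. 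You are right that this step does not follow from Assumption \ref{assum:lips_grad} as stated: since $\ell$ couples all blocks through $\sum_j B_j\bar{\boldsymbol{x}}_j$, global Lipschitz continuity only controls $\|B_i^T(\nabla\ell(\bar{\boldsymbol{x}}^{k+1})-\nabla\ell(\bar{\boldsymbol{x}}^{k}))\|_2$ by $L\|\bar{\boldsymbol{x}}^{k+1}-\bar{\boldsymbol{x}}^{k}\|_2$, which by Proposition \ref{prop:orthogonal_xi} involves \emph{all} blocks, not just the $i$-th. Your repair --- summing over $i$, using $\sum_i\|B_i^T\boldsymbol{v}\|_2^2=\|\boldsymbol{v}\|_2^2$, and concluding $\sum_i\|\boldsymbol{\lambda}_i^{k+1}-\boldsymbol{\lambda}_i^{k}\|_2^2\leq L^2\sum_i\|\bar{\boldsymbol{x}}_i^{k+1}-\bar{\boldsymbol{x}}_i^{k}\|_2^2$ --- is correct, and you are also right that this aggregated form is all that is consumed downstream: Lemma \ref{lemma:2} only uses the sum over $i$, and Lemma \ref{lemma:4} only needs each $\|\boldsymbol{\lambda}_i^{k+1}-\boldsymbol{\lambda}_i^{k}\|_2\to 0$, which the summed bound delivers since every term of a vanishing sum of nonnegative terms vanishes. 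So your proof is sound and, on this point, more careful than the paper's; the paper's per-block statement would require a blockwise Lipschitz hypothesis that is not assumed.
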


\begin{proof}
The first order optimality condition of the problem Algorithm \ref{alg:non_convex_admm} Line 7 yields $N$ vector equalities, one for each $\Bar{\boldsymbol{x}}_i$ variable:
\begin{align}
    \nabla_{\Bar{\boldsymbol{x}}_i} \ell \left( \sum_{i=1}^N B_i \Bar{\boldsymbol{x}}_i \right) + \boldsymbol{\lambda}_i^k + \rho(\Bar{\boldsymbol{x}}_i - \boldsymbol{x}_i^{k+1}) = 0.
\end{align}  

Using the dual update (Algorithm \ref{alg:non_convex_admm} Line 9), the $\boldsymbol{x}_i^{k+1}$ are such that

\begin{align}
    &\nabla_{\Bar{\boldsymbol{x}}_i} \ell \left( \sum_{i=1}^N B_i \Bar{\boldsymbol{x}}_i^{k+1} \right)  = - \boldsymbol{\lambda}_i^{k+1} \\
    & B_i^T \nabla \ell \left( \Bar{\boldsymbol{x}}^{k+1} \right) = - \boldsymbol{\lambda}_i^{k+1}.
\end{align}

To prove the inequality, we use Assumption \ref{assum:lips_grad} and the equality (\ref{lemma1:eq1}) :

\begin{align}
    &\| B_i^T \nabla \ell \left( \Bar{\boldsymbol{x}}^{k+1} \right)  - B_i^T \nabla \ell \left( \Bar{\boldsymbol{x}}^k \right)  \|^2_2  = \|\boldsymbol{\lambda}_i^{k+1} - \boldsymbol{\lambda}_i^{k} \|^2_2 \\ 
    &L^2 \| \Bar{\boldsymbol{x}}_i^{k+1} - \Bar{\boldsymbol{x}}_i^k \|^2_2  \geq \|\boldsymbol{\lambda}_i^{k+1} - \boldsymbol{\lambda}_i^{k} \|^2_2 
\end{align}
\end{proof}

\begin{lemma}
\label{lemma:2}
The difference of two consecutive values of the augmented Lagrangian is bounded by the following negative quantity :
\begin{align}
&\mathcal{L}(\{\boldsymbol{x}_i^{k+1}\}, \{\Bar{\boldsymbol{x}}_i^{k+1}\}, \{\boldsymbol{\lambda}_i^{k+1}\}) - \mathcal{L}(\{\boldsymbol{x}_i^{k}\}, \{\Bar{\boldsymbol{x}}_i^{k}\}, \{\boldsymbol{\lambda}_i^{k}\}) \\ 
&\leq \sum_{i=1}^N  \left ( \frac{- \gamma_i}{2} \| \boldsymbol{x}^{k+1}_i - \boldsymbol{x}^k_i \|^2 - \left( \frac{\Bar{\gamma}}{2} - \frac{L^2}{\rho} \right) \|\Bar{\boldsymbol{x}}_i^{k+1} - \Bar{\boldsymbol{x}}_i^{k}\|^2 \right) \nonumber
\end{align}

\end{lemma}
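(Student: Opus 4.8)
The plan is to write the one-step change $\mathcal{L}^{k+1}-\mathcal{L}^k$ (abbreviating $\mathcal{L}^k = \mathcal{L}(\{\boldsymbol{x}_i^k\},\{\Bar{\boldsymbol{x}}_i^k\},\{\boldsymbol{\lambda}_i^k\})$) as a telescoping sum
$$\mathcal{L}^{k+1}-\mathcal{L}^k = (\mathrm{I}) + (\mathrm{II}) + (\mathrm{III}),$$
obtained by updating one block of variables at a time in the order of Algorithm \ref{alg:non_convex_admm}: the term $(\mathrm{I})$ changes only the multipliers $\boldsymbol{\lambda}^k \to \boldsymbol{\lambda}^{k+1}$ (with $\boldsymbol{x}=\boldsymbol{x}^{k+1}$, $\Bar{\boldsymbol{x}}=\Bar{\boldsymbol{x}}^{k+1}$ fixed), the term $(\mathrm{II})$ changes only $\Bar{\boldsymbol{x}}^k \to \Bar{\boldsymbol{x}}^{k+1}$ (with $\boldsymbol{x}=\boldsymbol{x}^{k+1}$, $\boldsymbol{\lambda}=\boldsymbol{\lambda}^k$ fixed), and $(\mathrm{III})$ changes only $\boldsymbol{x}^k \to \boldsymbol{x}^{k+1}$ (with $\Bar{\boldsymbol{x}}=\Bar{\boldsymbol{x}}^k$, $\boldsymbol{\lambda}=\boldsymbol{\lambda}^k$ fixed). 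I would then bound each piece separately and sum.

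For the two primal pieces I would invoke exact minimization together with strong convexity (Assumption \ref{assum:convex_pb}). Since $\boldsymbol{x}^{k+1}$ solves the separable Line 5 subproblem and that objective coincides, block by block, with the $\boldsymbol{x}_i$-dependent part of $\mathcal{L}$ at the frozen values, strong convexity with parameter $\gamma_i$ gives the descent bound $(\mathrm{III}) \leq -\sum_{i=1}^N \frac{\gamma_i}{2}\|\boldsymbol{x}_i^{k+1}-\boldsymbol{x}_i^k\|_2^2$. Likewise $\Bar{\boldsymbol{x}}^{k+1}$ solves the Line 7 subproblem, strongly convex with parameter $\Bar{\gamma}$, so $(\mathrm{II}) \leq -\frac{\Bar{\gamma}}{2}\|\Bar{\boldsymbol{x}}^{k+1}-\Bar{\boldsymbol{x}}^k\|_2^2$; Proposition \ref{prop:orthogonal_xi} then splits this stacked norm into the per-block sum $-\frac{\Bar{\gamma}}{2}\sum_{i=1}^N\|\Bar{\boldsymbol{x}}_i^{k+1}-\Bar{\boldsymbol{x}}_i^k\|_2^2$, which is the form appearing in the statement.

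The delicate term is the dual piece $(\mathrm{I})$, which is the only positive contribution and therefore the main obstacle: the multiplier update drives the Lagrangian upward and this ascent must be dominated by the primal descent. Because $\mathcal{L}$ is affine in each $\boldsymbol{\lambda}_i$ through $\boldsymbol{\lambda}_i^T(\Bar{\boldsymbol{x}}_i-\boldsymbol{x}_i)$, I would first write $(\mathrm{I}) = \sum_{i=1}^N (\boldsymbol{\lambda}_i^{k+1}-\boldsymbol{\lambda}_i^k)^T(\Bar{\boldsymbol{x}}_i^{k+1}-\boldsymbol{x}_i^{k+1})$, and then substitute the dual update of Line 9, which reads $\Bar{\boldsymbol{x}}_i^{k+1}-\boldsymbol{x}_i^{k+1} = \frac{1}{\rho}(\boldsymbol{\lambda}_i^{k+1}-\boldsymbol{\lambda}_i^k)$, to obtain $(\mathrm{I}) = \sum_{i=1}^N \frac{1}{\rho}\|\boldsymbol{\lambda}_i^{k+1}-\boldsymbol{\lambda}_i^k\|_2^2$. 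This is precisely where Lemma \ref{lemma:1} is used: its inequality $\|\boldsymbol{\lambda}_i^{k+1}-\boldsymbol{\lambda}_i^k\|_2^2 \leq L^2\|\Bar{\boldsymbol{x}}_i^{k+1}-\Bar{\boldsymbol{x}}_i^k\|_2^2$, itself a consequence of the Lipschitz gradient of $\ell$ (Assumption \ref{assum:lips_grad}), converts the dual increments into controllable primal increments, yielding $(\mathrm{I}) \leq \sum_{i=1}^N \frac{L^2}{\rho}\|\Bar{\boldsymbol{x}}_i^{k+1}-\Bar{\boldsymbol{x}}_i^k\|_2^2$.

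Summing the three bounds collects the $\Bar{\boldsymbol{x}}_i$ terms with coefficient $-\big(\frac{\Bar{\gamma}}{2}-\frac{L^2}{\rho}\big)$ and the $\boldsymbol{x}_i$ terms with coefficient $-\frac{\gamma_i}{2}$, which is exactly the asserted inequality. I would close by remarking that the condition $\rho\Bar{\gamma}\geq 2L^2$ of Assumption \ref{assum:lips_grad} makes $\frac{\Bar{\gamma}}{2}-\frac{L^2}{\rho}$ nonnegative, so the right-hand side is genuinely a nonpositive quantity and the lemma indeed establishes sufficient decrease of the augmented Lagrangian along the iterations.
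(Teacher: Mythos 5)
Your proof is correct and follows essentially the same route as the paper's: the same three-way telescoping of the Lagrangian difference, strong convexity of the two subproblems for the primal descent, and Lemma \ref{lemma:1} to bound the dual ascent by $\frac{L^2}{\rho}\sum_{i=1}^N\|\Bar{\boldsymbol{x}}_i^{k+1}-\Bar{\boldsymbol{x}}_i^k\|_2^2$. If anything, your choice of intermediate points (freezing $\Bar{\boldsymbol{x}}$ at iteration $k$ when varying $\boldsymbol{x}$, and $\boldsymbol{x}$ at iteration $k+1$ when varying $\Bar{\boldsymbol{x}}$) aligns each difference exactly with the subproblem that was actually solved, which is slightly cleaner than the ordering written in the paper's decomposition.
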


\begin{proof}
The idea is to split the difference of two consecutive values of the augmented Lagrangian to explicit the update of each variable:

\begin{align}
    &\mathcal{L}(\{\boldsymbol{x}_i^{k+1}\}, \{\Bar{\boldsymbol{x}}_i^{k+1}\}, \{\boldsymbol{\lambda}_i^{k+1}\}) - \mathcal{L}(\{\boldsymbol{x}_i^{k}\}, \{\Bar{\boldsymbol{x}}_i^{k}\}, \{\boldsymbol{\lambda}_i^{k}\}) \\
    & = \mathcal{L}(\{\boldsymbol{x}_i^{k+1}\}, \{\Bar{\boldsymbol{x}}_i^{k+1}\}, \{\boldsymbol{\lambda}_i^{k+1}\}) - \mathcal{L}(\{\boldsymbol{x}_i^{k+1}\}, \{\Bar{\boldsymbol{x}}_i^{k+1}\}, \{\boldsymbol{\lambda}_i^{k}\}) \nonumber \\
    &  \quad \quad+ \mathcal{L}(\{\boldsymbol{x}_i^{k+1}\}, \{\Bar{\boldsymbol{x}}_i^{k+1}\}, \{\boldsymbol{\lambda}_i^{k}\}) - \mathcal{L}(\{\boldsymbol{x}_i^{k}\}, \{\Bar{\boldsymbol{x}}_i^{k+1}\}, \{\boldsymbol{\lambda}_i^{k}\})  \nonumber \\ 
    & \quad \quad  + \mathcal{L}(\{\boldsymbol{x}_i^{k}\}, \{\Bar{\boldsymbol{x}}_i^{k+1}\}, \{\boldsymbol{\lambda}_i^{k}\}) - \mathcal{L}(\{\boldsymbol{x}_i^{k}\}, \{\Bar{\boldsymbol{x}}_i^{k}\}, \{\boldsymbol{\lambda}_i^{k}\}).\nonumber
\end{align}

First, using the dual update rule (Algorithm \ref{alg:non_convex_admm} Line 9) and Lemma \ref{lemma:1} we have

\begin{align}
    &\mathcal{L}(\{\boldsymbol{x}_i^{k+1}\}, \{\Bar{\boldsymbol{x}}_i^{k+1}\}, \{\boldsymbol{\lambda}_i^{k+1}\}) - \mathcal{L}(\{\boldsymbol{x}_i^{k+1}\}, \{\Bar{\boldsymbol{x}}_i^{k+1}\}, \{\boldsymbol{\lambda}_i^{k}\}) \nonumber \\
    & \quad \quad = \frac{1}{\rho} \sum_{i=1}^N \| \boldsymbol{\lambda}_i^{k+1} - \boldsymbol{\lambda}_i^{k}\|^2_2 \\
    & \quad \quad \leq \frac{L^2}{\rho} \sum_{i=1}^N \|  \Bar{\boldsymbol{x}}_i^{k+1} - \Bar{\boldsymbol{x}}_i^{k} \|^2_2 \label{eq:lemma2_1}
\end{align}

In addition, $\mathcal{L}$ is strongly convex (Assumption \ref{assum:convex_pb}) so

\begin{align}
    & \mathcal{L}(\{\boldsymbol{x}_i^{k+1}\}, \{\Bar{\boldsymbol{x}}_i^{k+1}\}, \{\boldsymbol{\lambda}_i^{k}\}) - \mathcal{L}(\{\boldsymbol{x}_i^{k}\}, \{\Bar{\boldsymbol{x}}_i^{k+1}\}, \{\boldsymbol{\lambda}_i^{k}\}) \nonumber \\
    & \quad + \mathcal{L}(\{\boldsymbol{x}_i^{k}\}, \{\Bar{\boldsymbol{x}}_i^{k+1}\}, \{\boldsymbol{\lambda}_i^{k}\}) - \mathcal{L}(\{\boldsymbol{x}_i^{k}\}, \{\Bar{\boldsymbol{x}}_i^{k}\}, \{\boldsymbol{\lambda}_i^{k}\}) \nonumber \\ 
    & \leq \sum_{j=1}^N \Bigl( \bigl[ \nabla_{\boldsymbol{x}_j} \mathcal{L}(\{\boldsymbol{x}_i^{k+1}\}, \{\Bar{\boldsymbol{x}}_i^{k+1}\}, \{\boldsymbol{\lambda}_i^{k}\}) \bigr]^T (\boldsymbol{x}_j^{k+1} - \boldsymbol{x}_j^k) \nonumber \\
    & \hspace{4cm} - \frac{\gamma_j}{2} \| \boldsymbol{x}_j^{k+1} - \boldsymbol{x}_j^k \|^2_2 \Bigr) \nonumber \\
    & \hspace{2cm} + \Bigl[ \nabla_{\Bar{\boldsymbol{x}}} \mathcal{L}(\{\boldsymbol{x}_i^{k}\}, \Bar{\boldsymbol{x}}, \{\boldsymbol{\lambda}_i^{k}\}) \Bigr]^T (\Bar{\boldsymbol{x}}_i^{k+1} - \Bar{\boldsymbol{x}}_i^k) \nonumber \\
    & \hspace{2cm} - \frac{\Bar{\gamma}_i(\rho)}{2} \| \Bar{\boldsymbol{x}}^{k+1} - \Bar{\boldsymbol{x}}^k \|^2_2  \nonumber \\
    & \leq \sum_{i=1}^N \Bigl( - \frac{\gamma_i}{2} \| \boldsymbol{x}_i^{k+1} - \boldsymbol{\lambda}_i^k \|^2_2  - \frac{\Bar{\gamma}}{2} \| \Bar{\boldsymbol{x}}_i^{k+1} - \Bar{\boldsymbol{x}}_i^k \|_2^2 \Bigr) \label{eq:lemma2_2}
\end{align}

The last equality comes from Proposition \ref{prop:orthogonal_xi} and the optimality of the subproblems. Combining the two inequalities (\ref{eq:lemma2_1}) and (\ref{eq:lemma2_2}) leads to the result.
\end{proof}

\begin{lemma}
\label{lemma:3}
The limit $ \displaystyle \lim_{k \rightarrow \infty} \mathcal{L}(\{\boldsymbol{x}_i^k\}, \{\Bar{\boldsymbol{x}}_i^k\}, \{\boldsymbol{\lambda}_i^k\})$ exists and is bounded from below by $ \underset{\boldsymbol{x}_1, \ldots \boldsymbol{x}_N}{min} f(\boldsymbol{x}_1, \ldots, \boldsymbol{x}_n)$.
\end{lemma}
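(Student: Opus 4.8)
The plan is to combine the monotone-decrease estimate of Lemma \ref{lemma:2} with a uniform lower bound on the augmented Lagrangian, and then invoke the monotone convergence theorem. Write $\mathcal{L}^k := \mathcal{L}(\{\boldsymbol{x}_i^k\}, \{\Bar{\boldsymbol{x}}_i^k\}, \{\boldsymbol{\lambda}_i^k\})$ and $\boldsymbol{x}^{k+1} := \sum_{i=1}^N B_i \boldsymbol{x}_i^{k+1}$. First I would observe that Assumption \ref{assum:lips_grad} (specifically $\rho\bar{\gamma} \geq 2L^2$, i.e. $\bar{\gamma}/2 - L^2/\rho \geq 0$) together with $\gamma_i > 0$ makes the right-hand side of Lemma \ref{lemma:2} nonpositive, so the sequence $\{\mathcal{L}^k\}$ is monotonically non-increasing.

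The heart of the argument is the uniform lower bound $\mathcal{L}^{k+1} \geq \min_{\boldsymbol{x}_1,\ldots,\boldsymbol{x}_N} f$. I would start from the definition of the augmented Lagrangian and use Lemma \ref{lemma:1}, which gives $\boldsymbol{\lambda}_i^{k+1} = -B_i^T \nabla \ell(\Bar{\boldsymbol{x}}^{k+1})$, to rewrite the linear multiplier term as $-\langle \nabla \ell(\Bar{\boldsymbol{x}}^{k+1}), \Bar{\boldsymbol{x}}^{k+1} - \boldsymbol{x}^{k+1}\rangle$. Then, applying the descent lemma to the $L$-smooth function $\ell$ (Assumption \ref{assum:lips_grad}) at the points $\Bar{\boldsymbol{x}}^{k+1}$ and $\boldsymbol{x}^{k+1}$, I would replace $\ell(\Bar{\boldsymbol{x}}^{k+1}) - \langle \nabla \ell(\Bar{\boldsymbol{x}}^{k+1}), \Bar{\boldsymbol{x}}^{k+1} - \boldsymbol{x}^{k+1}\rangle$ by its lower bound $\ell(\boldsymbol{x}^{k+1}) - \tfrac{L}{2}\|\boldsymbol{x}^{k+1} - \Bar{\boldsymbol{x}}^{k+1}\|_2^2$.

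Next I would use the orthogonality of the $\{B_i \boldsymbol{z}\}$ and the isometry $\|B_i \boldsymbol{z}\|_2 = \|\boldsymbol{z}\|_2$ (Proposition \ref{prop:orthogonal_xi} and its proof) to split $\|\boldsymbol{x}^{k+1} - \Bar{\boldsymbol{x}}^{k+1}\|_2^2 = \sum_{i=1}^N \|\boldsymbol{x}_i^{k+1} - \Bar{\boldsymbol{x}}_i^{k+1}\|_2^2$, matching it against the quadratic penalty $\tfrac{\rho}{2}\sum_{i=1}^N \|\Bar{\boldsymbol{x}}_i^{k+1} - \boldsymbol{x}_i^{k+1}\|_2^2$ already present in $\mathcal{L}^{k+1}$. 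Since $\rho \geq L$, the net coefficient $\tfrac{\rho - L}{2}$ is nonnegative, so those terms can be dropped to obtain $\mathcal{L}^{k+1} \geq \sum_{i=1}^N g(\boldsymbol{x}_i^{k+1}) + \ell(\boldsymbol{x}^{k+1}) = f(\{\boldsymbol{x}_i^{k+1}\})$. As the local subproblem on Line 5 enforces $\boldsymbol{x}_i^{k+1} \in \chi_i$, the iterate is feasible, hence $f(\{\boldsymbol{x}_i^{k+1}\}) \geq \min f$, which is finite by Assumption \ref{assum:f_bounded}. Being non-increasing and bounded below, $\{\mathcal{L}^k\}$ converges, and since every term dominates $\min f$ the limit does too.

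The main obstacle I anticipate is getting this lower-bound step exactly right. One must apply the dual relation of Lemma \ref{lemma:1} in the correct direction, use the descent lemma with the right sign (it contributes a $-\tfrac{L}{2}\|\cdot\|_2^2$ term that must be absorbed by the $+\tfrac{\rho}{2}\|\cdot\|_2^2$ penalty under $\rho \geq L$), and crucially invoke the orthogonality of the duplicated variables so that the global squared norm splits cleanly into per-zone squared norms. Without this isometry the cross terms would survive and the penalty could not be guaranteed to dominate the Lipschitz term, so this is exactly where the structural assumption on the $B_i$ does the real work.
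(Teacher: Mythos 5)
Your proposal is correct and follows essentially the same route as the paper's proof: monotonicity from Lemma \ref{lemma:2}, then the lower bound $\mathcal{L}^{k+1} \geq f(\boldsymbol{x}^{k+1}) + \sum_{i=1}^N \tfrac{\rho-L}{2}\|\Bar{\boldsymbol{x}}_i^{k+1}-\boldsymbol{x}_i^{k+1}\|_2^2 \geq \min f$ obtained by combining the dual relation of Lemma \ref{lemma:1} with the descent lemma for the $L$-smooth $\ell$ and the orthogonality of the $B_i$, exactly as in Eq. (\ref{eq:lemma3_ineq}). No gaps.
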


\begin{proof}
    First, we use the descent property of $\ell$ under the Assumption \ref{assum:lips_grad} :

\begin{align}
    \ell \left( \boldsymbol{x}^{k+1} \right)& \leq \ell (\Bar{\boldsymbol{x}}^{k+1}) + \Bigl[\nabla \ell (\Bar{\boldsymbol{x}}^{k+1}) \Bigr]^T \Bigl( \boldsymbol{x}^{k+1} - \Bar{\boldsymbol{x}}^{k+1} \Bigr) \nonumber \\
    & \quad \quad + \frac{L}{2} \Bigl\| \boldsymbol{x}^{k+1} - \Bar{\boldsymbol{x}}^{k+1} \Bigr\|^2_2 \nonumber \\
    & \leq \ell (\Bar{\boldsymbol{x}}^{k+1}) + \sum_{i=1}^N \Bigl( \Bigl[\nabla_{\boldsymbol{\Bar{x}}_i} \ell (\Bar{\boldsymbol{x}}^{k+1}) \Bigr]^T \Bigl( \boldsymbol{x}_i^{k+1} -\Bar{\boldsymbol{x}}_i^{k+1} \Bigr) \nonumber \\
    & \quad \quad + \frac{L}{2} \Bigl\| \boldsymbol{x}_i^{k+1} - \Bar{\boldsymbol{x}}_i^{k+1} \Bigr\|^2_2 \Bigr). \nonumber
\end{align}

Plugging this result in the expression of the Lagrangian yields:

\begin{align}
    & \mathcal{L}(\{\boldsymbol{x}_i^{k+1}\}, \{\Bar{\boldsymbol{x}}_i^{k+1}\}, \{\boldsymbol{\lambda}_i^{k+1}\}) \nonumber\\
    & \quad  = \sum_{i=1}^N g(\boldsymbol{x}_i^{k+1}) + \ell \left( \Bar{\boldsymbol{x}}^{k+1} \right) + \sum_{i=1}^N \boldsymbol{\lambda}_i^{k+1,T} (\Bar{\boldsymbol{x}}_i^{k+1}-\boldsymbol{x}_i^{k+1}) \\
    & \hspace{2cm} + \sum_{i=1}^N \frac{\rho}{2} \|\Bar{\boldsymbol{x}}_i^{k+1}-\boldsymbol{x}_i^{k+1}\|_2^2 \nonumber \\
    & \quad  = \sum_{i=1}^N g(\boldsymbol{x}_i^{k+1}) + \ell \left( \Bar{\boldsymbol{x}}^{k+1} \right) \nonumber \\
    & \hspace{2cm} + \sum_{i=1}^N \left[ \nabla_{\boldsymbol{\Bar{x}}_i} \ell (\Bar{\boldsymbol{x}}^{k+1}) \right]^T \left(\boldsymbol{x}_i^{k+1} - \Bar{\boldsymbol{x}}_i^{k+1} \right) \nonumber \\
    &\hspace{2cm} + \sum_{i=1}^N \frac{\rho}{2} \|\Bar{\boldsymbol{x}}_i^{k+1}-\boldsymbol{x}_i^{k+1}\|_2^2 \label{eq:lemma3_ineq}\\
    & \quad \geq \sum_{i=1}^N g(\boldsymbol{x}_i^{k+1}) + \ell \left(\boldsymbol{x}^{k+1} \right) + \sum_{i=1}^N \frac{\rho-L}{2} \|\Bar{\boldsymbol{x}}_i^{k+1}-\boldsymbol{x}_i^{k+1}\|_2^2 \\
    &\quad = f(\boldsymbol{x}^{k+1}) + \sum_{i=1}^N \frac{\rho-L}{2} \|\Bar{\boldsymbol{x}}_i^{k+1}-\boldsymbol{x}_i^{k+1}\|_2^2
\end{align}

where (\ref{eq:lemma3_ineq}) comes from Lemma \ref{lemma:1}.

Since $ \displaystyle f$ is bounded from below (Assumption \ref{assum:f_bounded}), so is $ \displaystyle \mathcal{L}(\{\boldsymbol{x}_i^{k+1}\}, \{\Bar{\boldsymbol{x}}_i^{k+1}\}, \{\boldsymbol{\lambda}_i^{k+1}\})$. 

In addition, Lemma \ref{lemma:2} shows that the sequence $\{ \mathcal{L}(\{\boldsymbol{x}_i^{k+1}\}, \{\Bar{\boldsymbol{x}}_i^{k+1}\}, \{\boldsymbol{\lambda}_i^{k+1}\} \}_k$ is decreasing, which leads to the result. 
\end{proof}

\begin{lemma}
\label{lemma:4}
For $i=1, \ldots, N$, $$ \lim_{k \xrightarrow{}\infty} \| \Bar{\boldsymbol{x}}_i^k  - \boldsymbol{x}_i^k\|_2 =0  $$
\end{lemma}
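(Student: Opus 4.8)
The plan is to show that the primal residual $\Bar{\boldsymbol{x}}_i^k - \boldsymbol{x}_i^k$ is controlled by successive differences of the dual variables, and that these differences vanish because the augmented Lagrangian converges. First I would combine Lemma \ref{lemma:2} and Lemma \ref{lemma:3}. Telescoping the per-iteration decrease of Lemma \ref{lemma:2} from $k=0$ to $K$ gives
$$\mathcal{L}(\{\boldsymbol{x}_i^{K+1}\},\{\Bar{\boldsymbol{x}}_i^{K+1}\},\{\boldsymbol{\lambda}_i^{K+1}\}) - \mathcal{L}(\{\boldsymbol{x}_i^{0}\},\{\Bar{\boldsymbol{x}}_i^{0}\},\{\boldsymbol{\lambda}_i^{0}\}) \leq -\sum_{k=0}^{K}\sum_{i=1}^N\left(\frac{\gamma_i}{2}\|\boldsymbol{x}_i^{k+1}-\boldsymbol{x}_i^k\|_2^2 + \Big(\frac{\Bar{\gamma}}{2}-\frac{L^2}{\rho}\Big)\|\Bar{\boldsymbol{x}}_i^{k+1}-\Bar{\boldsymbol{x}}_i^k\|_2^2\right).$$
Since the Lagrangian is bounded below and converges (Lemma \ref{lemma:3}), the right-hand side is bounded uniformly in $K$, so the full nonnegative series converges. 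Under Assumption \ref{assum:lips_grad} the coefficient $\frac{\Bar{\gamma}}{2}-\frac{L^2}{\rho}$ is positive (as is each $\gamma_i$), hence every summand must tend to zero; in particular $\|\Bar{\boldsymbol{x}}_i^{k+1}-\Bar{\boldsymbol{x}}_i^k\|_2 \to 0$ for each $i$.

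Second, I would invoke the inequality of Lemma \ref{lemma:1}, namely $\|\boldsymbol{\lambda}_i^{k+1}-\boldsymbol{\lambda}_i^k\|_2^2 \leq L^2\|\Bar{\boldsymbol{x}}_i^{k+1}-\Bar{\boldsymbol{x}}_i^k\|_2^2$, to transfer the vanishing of the $\Bar{\boldsymbol{x}}_i$ increments to the dual increments, yielding $\|\boldsymbol{\lambda}_i^{k+1}-\boldsymbol{\lambda}_i^k\|_2 \to 0$.

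Finally, the dual update of Algorithm \ref{alg:non_convex_admm} (Line 9) reads $\boldsymbol{\lambda}_i^{k+1}=\boldsymbol{\lambda}_i^k+\rho(\Bar{\boldsymbol{x}}_i^{k+1}-\boldsymbol{x}_i^{k+1})$, so the primal residual is exactly $\Bar{\boldsymbol{x}}_i^{k+1}-\boldsymbol{x}_i^{k+1}=\frac{1}{\rho}(\boldsymbol{\lambda}_i^{k+1}-\boldsymbol{\lambda}_i^k)$. Taking norms gives $\|\Bar{\boldsymbol{x}}_i^{k+1}-\boldsymbol{x}_i^{k+1}\|_2=\frac{1}{\rho}\|\boldsymbol{\lambda}_i^{k+1}-\boldsymbol{\lambda}_i^k\|_2\to 0$, which is precisely the claim.

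The main obstacle I anticipate is justifying the strict positivity of the coefficient $\frac{\Bar{\gamma}}{2}-\frac{L^2}{\rho}$: Assumption \ref{assum:lips_grad} as stated gives $\rho\Bar{\gamma}\geq 2L^2$, hence only a nonnegative coefficient, whereas the summability argument needs it bounded away from zero to force $\|\Bar{\boldsymbol{x}}_i^{k+1}-\Bar{\boldsymbol{x}}_i^k\|_2\to 0$. I would therefore read the assumption as the strict inequality $\rho\Bar{\gamma}>2L^2$ (equivalently, choose $\rho$ slightly above the threshold), under which convergence of the nonnegative series immediately forces each summand to zero. Every remaining step is a routine chaining of the already-established lemmas and the dual update rule.
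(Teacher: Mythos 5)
Your proof is correct and follows essentially the same route as the paper's: Lemma \ref{lemma:2} plus the convergence of the Lagrangian (Lemma \ref{lemma:3}) give vanishing successive differences of $\Bar{\boldsymbol{x}}_i^k$, Lemma \ref{lemma:1} transfers this to the dual increments, and the dual update identifies the primal residual with $\frac{1}{\rho}(\boldsymbol{\lambda}_i^{k+1}-\boldsymbol{\lambda}_i^k)$. Your remark about needing the coefficient $\frac{\Bar{\gamma}}{2}-\frac{L^2}{\rho}$ to be strictly positive is a legitimate point that the paper's proof passes over silently, and reading Assumption \ref{assum:lips_grad} as the strict inequality $\rho\Bar{\gamma} > 2L^2$ is the right fix.
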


\begin{proof}

We have shown that the sequence $\{ \mathcal{L}(\{\boldsymbol{x}_i^{k+1}\}, \{\Bar{\boldsymbol{x}}_i^{k+1}\}, \{\boldsymbol{\lambda}_i^{k+1}\} \}_k$ converges. From Lemma \ref{lemma:2} we have

\begin{equation}
    \lim_{k \xrightarrow{} \infty} \| \boldsymbol{x}_i^{k+1} - \boldsymbol{x}_i^k \|_2 = 0 \label{eq:lim1}
\end{equation}

and 

\begin{equation}
    \lim_{k \xrightarrow{} \infty} \| \Bar{\boldsymbol{x}}_i^{k+1} - \Bar{\boldsymbol{x}}_i^k \|_2 = 0 \label{eq:lim2}.
\end{equation}

In addition, since $L^2 \| \Bar{\boldsymbol{x}}_i^{k+1} - \Bar{\boldsymbol{x}}_i^k \|^2_2 \geq \| \boldsymbol{\lambda}_i^{k+1} - \boldsymbol{\lambda}_i^k \|_2^2$ for all $i$, we have :

\begin{equation}
    \lim_{k \xrightarrow{} \infty} \| \boldsymbol{\lambda}_i^{k+1} - \boldsymbol{\lambda}_i^k \|_2 = 0 \label{eq:lim3}.
\end{equation}

The dual update rule (Algorithm \ref{alg:non_convex_admm} Line 9) then ensures that $$ \lim_{k \xrightarrow{}\infty} \| \Bar{\boldsymbol{x}}_i^k  - \boldsymbol{x}_i^k\|_2 =0.  $$

\end{proof}

\begin{lemma}
\label{lemma:5}

Any limit point $(\{\boldsymbol{x}_i^*\}, \{\bar{\boldsymbol{x}}_i^*\}, \{\boldsymbol{\lambda}_i^*\})$ of the sequence $(\{\boldsymbol{x}_i\}, \{\bar{\boldsymbol{x}}_i\}, \{ \boldsymbol{\lambda}_i\})$ generated by Algorithm \ref{alg:non_convex_admm} is a stationary solution of the problem (\ref{eq:opt_problem_appendix}):

\begin{align*}
    & (\nabla g(\boldsymbol{x}_i^*) - \boldsymbol{\lambda}_i^*)^T(\boldsymbol{y}-\boldsymbol{x}_i^*) \geq 0 \quad \forall \boldsymbol{y}_i \in \chi_i \\
    & \nabla_{\boldsymbol{\bar{x}}_i} \ell(\bar{\boldsymbol{x}}^*) - \boldsymbol{\lambda}_i^* = 0, \quad i=1, \ldots, N \\
    & \boldsymbol{x}_i^* = \Bar{\boldsymbol{x}}_i^* \quad i=1, \ldots, N 
\end{align*}

\end{lemma}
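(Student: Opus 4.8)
The plan is to extract a convergent subsequence and pass to the limit in the first-order optimality conditions of the two subproblems, using the asymptotic regularity established in Lemma \ref{lemma:4}. Let $(\{\boldsymbol{x}_i^{k_j}\}, \{\bar{\boldsymbol{x}}_i^{k_j}\}, \{\boldsymbol{\lambda}_i^{k_j}\})$ be a subsequence converging to the limit point $(\{\boldsymbol{x}_i^*\}, \{\bar{\boldsymbol{x}}_i^*\}, \{\boldsymbol{\lambda}_i^*\})$. The first observation I would make is that, because Lemma \ref{lemma:4} delivers $\|\boldsymbol{x}_i^{k+1} - \boldsymbol{x}_i^k\|_2 \to 0$, $\|\bar{\boldsymbol{x}}_i^{k+1} - \bar{\boldsymbol{x}}_i^k\|_2 \to 0$ and $\|\boldsymbol{\lambda}_i^{k+1} - \boldsymbol{\lambda}_i^k\|_2 \to 0$, the index-shifted subsequences evaluated at $k_j+1$ share the same limit. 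This lets me move freely between iterate $k_j$ and $k_j+1$ when taking limits. The consensus condition $\boldsymbol{x}_i^* = \bar{\boldsymbol{x}}_i^*$ then follows immediately, since Lemma \ref{lemma:4} also states $\|\bar{\boldsymbol{x}}_i^k - \boldsymbol{x}_i^k\|_2 \to 0$, so the two limits coincide.

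Next I would recover the gradient/dual relation. Lemma \ref{lemma:1} already gives, for every $k$, the identity $\nabla_{\bar{\boldsymbol{x}}_i}\ell(\bar{\boldsymbol{x}}^{k+1}) = -\boldsymbol{\lambda}_i^{k+1}$ (equivalently $B_i^T \nabla \ell(\bar{\boldsymbol{x}}^{k+1}) = -\boldsymbol{\lambda}_i^{k+1}$). Since $\ell$ has a Lipschitz-continuous gradient by Assumption \ref{assum:lips_grad}, $\nabla \ell$ is continuous, so evaluating this identity along the subsequence and passing to the limit yields the stationarity condition in the $\bar{\boldsymbol{x}}_i$ block $\nabla_{\bar{\boldsymbol{x}}_i}\ell(\bar{\boldsymbol{x}}^*) = -\boldsymbol{\lambda}_i^*$ (modulo the sign convention adopted for the multipliers).

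The main work is the variational inequality for the $\boldsymbol{x}_i$ block. I would start from the first-order optimality condition of the strongly convex subproblem on Line 5 of Algorithm \ref{alg:non_convex_admm}: for all $\boldsymbol{y} \in \chi_i$,
\[
\left( \nabla g(\boldsymbol{x}_i^{k+1}) - \boldsymbol{\lambda}_i^k - \rho(\bar{\boldsymbol{x}}_i^k - \boldsymbol{x}_i^{k+1}) \right)^T (\boldsymbol{y} - \boldsymbol{x}_i^{k+1}) \geq 0.
\]
The key algebraic step is to eliminate the stale multiplier $\boldsymbol{\lambda}_i^k$ using the dual update $\boldsymbol{\lambda}_i^{k+1} = \boldsymbol{\lambda}_i^k + \rho(\bar{\boldsymbol{x}}_i^{k+1} - \boldsymbol{x}_i^{k+1})$, which rewrites the bracketed vector as $\nabla g(\boldsymbol{x}_i^{k+1}) - \boldsymbol{\lambda}_i^{k+1} + \rho(\bar{\boldsymbol{x}}_i^{k+1} - \bar{\boldsymbol{x}}_i^k)$. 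The correction term $\rho(\bar{\boldsymbol{x}}_i^{k+1} - \bar{\boldsymbol{x}}_i^k)$ vanishes by Lemma \ref{lemma:4}. Passing to the limit along the subsequence, using the continuity of $\nabla g$ (here $g$ is a smooth convex quadratic) and the fact that $\boldsymbol{y}$ is an arbitrary fixed point of $\chi_i$, I obtain $(\nabla g(\boldsymbol{x}_i^*) - \boldsymbol{\lambda}_i^*)^T(\boldsymbol{y} - \boldsymbol{x}_i^*) \geq 0$ for all $\boldsymbol{y} \in \chi_i$, completing the three stationarity conditions.

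The step I expect to be most delicate is the index bookkeeping in this last limit: the optimality condition mixes iterates at $k$ and $k+1$, so I must justify that $\boldsymbol{x}_i^{k_j+1}$, $\boldsymbol{\lambda}_i^{k_j+1}$ and $\boldsymbol{\lambda}_i^{k_j}$ all converge to the same limits as their unshifted counterparts --- which is precisely the role of the asymptotic regularity from Lemma \ref{lemma:4}. Everything else, namely the preservation of the inequality under limits and the continuity of the gradients, is routine once that convergence of the shifted sequences is in place.
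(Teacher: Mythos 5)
Your proposal is correct and follows essentially the same route as the paper: take limits of the Lemma \ref{lemma:4} consensus result, pass to the limit in the first-order optimality condition of the Line 5 subproblem (where the $\rho$-weighted term vanishes), and take limits in the identity $\nabla_{\bar{\boldsymbol{x}}_i}\ell(\bar{\boldsymbol{x}}^{k+1}) = -\boldsymbol{\lambda}_i^{k+1}$ from Lemma \ref{lemma:1}. Your explicit substitution of the dual update to eliminate the stale multiplier, and your remark on the sign convention, are slightly more careful than the paper's presentation (which cites Lemma \ref{lemma:2} where it means Lemma \ref{lemma:1} and ends with $\nabla_{\boldsymbol{\bar{x}}_i}\ell(\bar{\boldsymbol{x}}^*) + \boldsymbol{\lambda}_i^* = 0$ against the stated minus sign), but the argument is the same.
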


\begin{proof}

By taking the limit of the result of Lemma \ref{lemma:4}, we have that $\boldsymbol{x}_i^* = \Bar{\boldsymbol{x}}_i^*$ for all $i$. 

In addition, for all $y \in \chi_i$ the optimality condition of the problem in Algorithm \ref{alg:non_convex_admm} Line 5 is

\begin{align}
    & \left[ \nabla_{\boldsymbol{x}_i} g(\boldsymbol{x}_i^{k+1}) - \boldsymbol{\lambda}_i^k + \rho(\Bar{\boldsymbol{x}}_i^{k} -\boldsymbol{x}_i^{k+1}) \right]^T (\boldsymbol{y}-\boldsymbol{x}_i^{k+1}) \geq 0 \label{inequality_opt_condition}
\end{align}

We previously showed that for all $i$, $\displaystyle \lim_{k \xrightarrow{}\infty} \| \Bar{\boldsymbol{x}}_i^k  - \boldsymbol{x}_i^k\| = 0$. In addition, Eq. (\ref{eq:lim1},\ref{eq:lim2},\ref{eq:lim3}) give $ \displaystyle \lim_{k \xrightarrow{}\infty} \Bar{\boldsymbol{x}}_i^{k} = \Bar{\boldsymbol{x}}_i^*$, $\displaystyle \lim_{k \xrightarrow{}\infty} \boldsymbol{x}_i^k = \boldsymbol{x}_i^*$, $\displaystyle \lim_{k \xrightarrow{}\infty} \boldsymbol{\lambda}_i^k = \boldsymbol{\lambda}_i^*$. Taking the limit in the above inequality (\ref{inequality_opt_condition}) yields:

\begin{align}
        & (\nabla g(\boldsymbol{x}_i^*) - \boldsymbol{\lambda}_i^*)^T(\boldsymbol{y}-\boldsymbol{x}_i^*) \geq 0 \quad \forall \boldsymbol{y}_i \in \chi_i.
\end{align}

Taking the limit in the result of Lemma \ref{lemma:2} gives

\begin{align}
    \nabla_{\boldsymbol{\bar{x}}_i} \ell(\bar{\boldsymbol{x}}^*) + \boldsymbol{\lambda}_i^* = 0 .
\end{align}

\end{proof}

From these lemmas, the proof of Theorem \ref{theorem:1} is identical to the argument given in \cite{Hong2014} (Theroem 3.4), and is based on the compactness of the sets $\chi_i$. For completeness, we remind the theorem and state the argument here.

\begin{theorem}
    The sequence generated by Algorithm \ref{alg:non_convex_admm} converges to the set of stationary solutions of Problem (\ref{opt:admm_problem}) 

$$ \lim_{k\xrightarrow{}\infty} dist\left(\{\boldsymbol{x}_i^k\}, \{\Bar{\boldsymbol{x}}_i^k\}, \{\boldsymbol{\lambda}_i^k\}; Z^*\right) = 0,$$

where $Z^*$ is the set of primal-dual stationary solutions of the problem. The distance between a vector $\boldsymbol{x}$ and the set $Z^*$ is defined as:

$$ \text{dist} \left( \boldsymbol{x}; Z^*  \right) = \underset{y \in Z^*}{\text{min}} \| x-y \|_2$$

\end{theorem}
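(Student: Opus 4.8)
The plan is to treat Lemmas \ref{lemma:1}--\ref{lemma:5} as the analytic core and to assemble from them the full set-convergence statement by a standard boundedness-plus-subsequence argument, exactly as in \cite{Hong2014}. Write $w^k := (\{\boldsymbol{x}_i^k\}, \{\Bar{\boldsymbol{x}}_i^k\}, \{\boldsymbol{\lambda}_i^k\})$ for the concatenated iterate. Lemma \ref{lemma:5} already shows that \emph{every} limit point of $\{w^k\}$ is a primal-dual stationary point, i.e.\ lies in $Z^*$. Two ingredients then remain: (i) the sequence $\{w^k\}$ must be bounded, so that limit points exist and $Z^*$ is nonempty; and (ii) a topological argument must upgrade ``every limit point lies in $Z^*$'' to ``$\text{dist}(w^k;Z^*)\to 0$''.

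For (i), I would establish boundedness component by component. The primal variables satisfy $\boldsymbol{x}_i^k \in \chi_i$ for every $k$, and since the comfort constraints $m_i \le \delta_{t,i} \le M_i$ make each $\chi_i$ a compact box, the $\{\boldsymbol{x}_i^k\}$ are bounded. Lemma \ref{lemma:4} gives $\|\Bar{\boldsymbol{x}}_i^k - \boldsymbol{x}_i^k\|_2 \to 0$, so the $\{\Bar{\boldsymbol{x}}_i^k\}$ remain asymptotically within a fixed distance of the bounded $\{\boldsymbol{x}_i^k\}$ and are therefore bounded as well. Finally, the dual variables are controlled by the explicit identity of Lemma \ref{lemma:1}, namely $\boldsymbol{\lambda}_i^{k+1} = -B_i^T \nabla \ell(\Bar{\boldsymbol{x}}^{k+1})$: since $\Bar{\boldsymbol{x}}^{k+1}$ ranges over a bounded set and $\nabla \ell$ is (Lipschitz, hence) continuous by Assumption \ref{assum:lips_grad}, the multipliers $\{\boldsymbol{\lambda}_i^k\}$ are bounded too. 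Hence $\{w^k\}$ is bounded, and by Bolzano--Weierstrass it has at least one limit point, which by Lemma \ref{lemma:5} belongs to $Z^*$; in particular $Z^* \neq \emptyset$.

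For (ii), I would argue by contradiction. Suppose $\text{dist}(w^k; Z^*) \not\to 0$. Then there exist $\epsilon > 0$ and a subsequence $\{w^{k_j}\}$ with $\text{dist}(w^{k_j}; Z^*) \ge \epsilon$ for all $j$. Because $\{w^{k_j}\}$ is bounded, it admits a further convergent subsequence $w^{k_{j_l}} \to w^*$; by Lemma \ref{lemma:5}, $w^* \in Z^*$, so $\text{dist}(w^{k_{j_l}}; Z^*) \le \|w^{k_{j_l}} - w^*\|_2 \to 0$, contradicting $\text{dist}(w^{k_{j_l}}; Z^*) \ge \epsilon$. This forces $\text{dist}(w^k; Z^*) \to 0$, which is the claim. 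The inequality here uses only that $w^* \in Z^*$, which is exactly the content of Lemma \ref{lemma:5}.

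The step I expect to be the real crux is the dual boundedness in (i). For general nonconvex ADMM, boundedness of the multipliers is the delicate point, and is usually where extra structural assumptions must enter. Here, however, it is purchased cheaply: Lemma \ref{lemma:1} pins each $\boldsymbol{\lambda}_i^{k+1}$ to the gradient of $\ell$ evaluated at the bounded iterate $\Bar{\boldsymbol{x}}^{k+1}$, so the compactness of the $\chi_i$ (coming from the box comfort constraints) together with continuity of $\nabla\ell$ does all the work. Once boundedness is secured, the subsequence argument in (ii) is purely topological and routine, and the proof reduces to the reference argument of \cite{Hong2014} adapted to the $N$ orthogonal duplicated variables handled in Propositions \ref{prop:orthogonal_xi} and the preceding ones.
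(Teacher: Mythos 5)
Your proposal is correct and follows essentially the same route as the paper's own proof: compactness of the $\chi_i$ (plus Lemma \ref{lemma:4}) for primal boundedness, the identity $\boldsymbol{\lambda}_i^{k+1} = -B_i^T\nabla\ell(\Bar{\boldsymbol{x}}^{k+1})$ from Lemma \ref{lemma:1} together with continuity of $\nabla\ell$ for dual boundedness, Lemma \ref{lemma:5} to place every limit point in $Z^*$, and the standard subsequence-contradiction argument to conclude $\text{dist}(w^k;Z^*)\to 0$. Your handling of the boundedness of $\{\Bar{\boldsymbol{x}}_i^k\}$ via Lemma \ref{lemma:4} is in fact slightly more careful than the paper's, which attributes it directly to compactness of $\chi_i$ even though the $\Bar{\boldsymbol{x}}_i$-update is unconstrained.
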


\begin{proof}    First, the compactness of the sets $\chi_i$ ensures for all $i$ the sequences $\{\boldsymbol{x}_i^k\}_k$ and $\{\bar{\boldsymbol{x}}_i^k\}_k$ have a limit point. In addition, the function $\ell$ has Lipschitz continuous gradients $\nabla_{\bar{\boldsymbol{x}}_i} \ell_i (\bar{\boldsymbol{x}}_i^k)$, so for all $i$ the set $\{\nabla_{\bar{\boldsymbol{x}}_i} \ell_i (\bar{\boldsymbol{x}}_i) \mid \bar{\boldsymbol{x}}_i \in \chi_i \}$ is bounded and $\{\nabla_{\bar{\boldsymbol{x}}_i} \ell_i (\bar{\boldsymbol{x}}_i^k)\}_k$ is a bounded sequence. From Eq. (\ref{lemma1:eq1}), $\displaystyle \nabla_{\bar{\boldsymbol{x}}_i} \ell_i (\bar{\boldsymbol{x}}_i^k) = - \boldsymbol{\lambda}^k_i$, so the sequences $\{\boldsymbol{\lambda}_i^k\}_k$ also have a limit point.

    The sequences $\{\boldsymbol{x}_i^k\}_k$, $\{\bar{\boldsymbol{x}}_i^k\}_k$, and $\{\boldsymbol{\lambda}_i^k\}_k$ lie in compact sets, so for all $i$ every subsequence $(\{\boldsymbol{x}_i^{k_j}\}, \{\bar{\boldsymbol{x}}_i^{k_j}\}, \{\boldsymbol{\lambda}_i^{k_j}\})$ has a limit point $(\{\hat{\boldsymbol{x}}_i\}, \{\hat{\bar{\boldsymbol{x}}}_i\}, \{\hat{\boldsymbol{\lambda}}_i\})$. From Lemma \ref{lemma:5} we know this limit point is in $Z^*$. By further restricting the subsequence if necessary, we can assume this limit point is unique.
    
    Suppose the sequence $(\{\boldsymbol{x}_i^{k_j}\}, \{\bar{\boldsymbol{x}}_i^{k_j}\}, \{\boldsymbol{\lambda}_i^{k_j}\})$ does not converge to $Z^*$. Then
    
    \begin{align}
        \lim_{j \rightarrow \infty} \text{dist}\left((\{\boldsymbol{x}_i^{k_j}\}, \{\bar{\boldsymbol{x}}_i^{k_j}\}, \{\boldsymbol{\lambda}_i^{k_j}\}); Z^*\right) = e > 0. \label{theorem:dist_eq}
    \end{align}
    
    But there exists an $n > 0$ such that for all $j \geq n$,
    
    \begin{align}
        \left\| (\{\boldsymbol{x}_i^{k_j}\}, \{\bar{\boldsymbol{x}}_i^{k_j}\}, \{\boldsymbol{\lambda}_i^{k_j}\}) - (\{\hat{\boldsymbol{x}}_i\}, \{\hat{\bar{\boldsymbol{x}}}_i\}, \{\hat{\boldsymbol{\lambda}}_i\}) \right\|_2^2 \leq \frac{e}{2},
    \end{align}
    
    and by definition of the distance,
    
    \begin{align}
        \text{dist}\left( (\{\boldsymbol{x}_i^{k_j}\}, \{\bar{\boldsymbol{x}}_i^{k_j}\}, \{\boldsymbol{\lambda}_i^{k_j}\}); Z^* \right) \leq \frac{e}{2},
    \end{align}
    
    which contradicts Eq. (\ref{theorem:dist_eq}). We have shown that every sequence of iterates has a limit point that is a stationary solution. Furthermore, the sequence of iterates generated by the algorithm converges to one of these limit points. The sequence of iterates produced by Algorithm \ref{alg:non_convex_admm} therefore converges to the set of stationary solutions.

\end{proof}

\vspace{2cm}

\textit{©2024 IEEE.  Personal use of this material is permitted.  Permission from IEEE must be obtained for all other uses, in any current or future media, including reprinting/republishing this material for advertising or promotional purposes, creating new collective works, for resale or redistribution to servers or lists, or reuse of any copyrighted component of this work in other works.}

\end{document}